\theoremstyle{break}
\newtheorem{theorem}{Theorem}
\newtheorem{proposition}[theorem]{Proposition}
\newtheorem{cororally}[theorem]{Corollary}
\newtheorem{lemma}[theorem]{Lemma}
\theoremstyle{definition}
\newtheorem{definition}[theorem]{Definition}
\newtheorem{example}[theorem]{Example}
\newtheorem{remark}[theorem]{Remark}
\newcommand{\C}{\mathbb{C}}
\newcommand{\R}{\mathbb{R}}
\newcommand{\N}{\mathbb{N}}
\newcommand{\Z}{\mathbb{Z}}
\newcommand{\Q}{\mathbb{Q}}
\newcommand{\D}{\mathcal{D}}
\newcommand{\F}{\mathbb{F}}
\newcommand{\supp}{{\rm supp}}
\newcommand{\Fix}{{\rm Fix}}
\renewcommand{\d}{\partial}
\renewcommand{\i}{{\bf i}}
\newcommand{\e}{\varepsilon}
\newcommand{\Cs}{\mbox{${\rm C}^\ast$}}
\author{Ryo Ochi}
\title{A remark on the freeness condition of Suzuki's correspondence theorem for intermediate \Cs-algebras}
\begin{document}
\maketitle
\begin{abstract}
  Let $\Gamma$ be a discrete group satisfying the approximation property (AP).
  Let $X$, $Y$ be $\Gamma$-spaces and $\pi \colon Y \to X$ be a proper factor map
  which is injective on the non-free part. 
  We prove the one-to-one correspondence between intermediate \Cs-algebras of $C_0(X) \rtimes_r \Gamma \subset C_0(Y) \rtimes \Gamma$
  and intermediate $\Gamma$-\Cs-algebras of $C_0(X) \subset C_0(Y)$.
  This is a generalization of Suzuki's theorem that proves the statement for free actions.
\end{abstract}

\section{Introduction}
Let $\Gamma$ be a discrete group and $X$, $Y$ be locally compact spaces on which $\Gamma$ acts.
Let $\pi \colon Y \to X$ be a proper factor map.
We study the relation between intermediate $\Gamma$-\Cs-algebras of $C_0(X) \subset C_0(Y)$
and intermediate \Cs-algebras of $C_0(X) \rtimes_r \Gamma \subset C_0(Y) \rtimes_r \Gamma$.

Inclusions of operator algebras play an important role in many subjects including operator theory and knot theory. 
Structures of subalgebras of \Cs-algebras have been studied by many hands
(\cite{izumi1998galois}, \cite{izumi2002inclusions}, \cite{ge1996tensor}, \cite{zacharias2001splitting}, \cite{zsido2000criterion}, etc.).

A Galois correspondence theorem in operator algebras refers to a type of structure results for subalgebras of crossed products and fixed point subalgebras of operator algebras.
This is proved in many cases.
More precisely, a Galois correspondence is
that for an operator algebra $M$ on which a compact group $G$ (or a discrete group $\Gamma$) acts, 
there exists a one-to-one correspondence between intermediate operator algebras of $M^G \subset M$ and closed subgroups of $G$
(or a one-to-one correspondence between intermediate operator algebras of $M \subset M \rtimes \Gamma$ and subgroups of $\Gamma$).
Izumi, Longo and Popa \cite{izumi1998galois} prove the Galois correspondence
for a factor $M$ on which a compact group $G$ acts minimally (or a discrete group $\Gamma$ acts outerly).
In \cite{izumi2002inclusions}, Izumi proves the Galois correspondence for a simple $\sigma$-unital \Cs-algebra on which a finite group acts outerly.

Ge and Kadison \cite{ge1996tensor} prove the tensor splitting theorem
that for every factor $M$ and every von Neumann algebra $N$,
every von Neumann subalgebra of $M \overline{\otimes} N$ which contains $M$ is of the form $M \overline{\otimes} N_0$
for some von Neumann subalgebra $N_0$ of $N$.
In the case of simple \Cs-algebras, the tensor splitting theorem is established under some conditions (see \cite{zacharias2001splitting}, \cite{zsido2000criterion}).

Suzuki proves the following theorem among others in \cite{suzuki2018complete}.
\begin{theorem}[Suzuki, \cite{suzuki2018complete}, Main Theorem ($\Cs$-case)]
  Let $\Gamma$ be a discrete group satisfying the AP.
  Let $X$, $Y$ be $\Gamma$-spaces on which $\Gamma$ acts freely and $\pi$ be a proper factor map from $Y$ to $X$.
  Then the map
  \begin{align*}
    C_0(Z) \mapsto C_0(Z) \rtimes_r \Gamma
  \end{align*}
  gives a lattice isomorphism between the lattice of intermediate extensions of $\pi$ and that of intermediate \Cs-algebras of $C_0(X) \rtimes_r \Gamma \subset C_0(Y) \rtimes_r \Gamma$.
\end{theorem}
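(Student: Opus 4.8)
The plan is to show that the assignment $C_0(Z)\mapsto C_0(Z)\rtimes_r\Gamma$ is an order isomorphism with inverse $B\mapsto B\cap C_0(Y)$; since both posets are lattices and both maps are order preserving, this upgrades automatically to a lattice isomorphism. Well-definedness and order preservation are routine: an equivariant inclusion of $\Gamma$-$\Cs$-algebras $C_0(X)\subseteq C_0(Z)\subseteq C_0(Y)$ induces inclusions of reduced crossed products, which are injective because the canonical conditional expectation $E\colon C_0(Y)\rtimes_r\Gamma\to C_0(Y)$ onto the coefficient algebra restricts compatibly to each subalgebra and is faithful there. The same expectation yields injectivity of the correspondence: for an intermediate extension $Z$ one recovers $C_0(Z)=(C_0(Z)\rtimes_r\Gamma)\cap C_0(Y)$, since any $x$ in the intersection satisfies $x=E(x)\in C_0(Z)$. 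Thus the theorem reduces to surjectivity: for every intermediate $\Cs$-algebra $B$ with $C_0(X)\rtimes_r\Gamma\subseteq B\subseteq C_0(Y)\rtimes_r\Gamma$, setting $A_0:=B\cap C_0(Y)$, one must prove $A_0=C_0(Z)$ for an intermediate extension $Z$ and $B=A_0\rtimes_r\Gamma$.

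First I would check that $A_0$ is a $\Gamma$-invariant $\Cs$-subalgebra of $C_0(Y)$ containing $C_0(X)$: invariance follows by conjugating $x\in A_0$ with $c\lambda_g\in C_0(X)\rtimes_r\Gamma\subseteq B$ and letting $c$ run through an approximate unit of $C_0(X)$, which is legitimate because properness of $\pi$ makes $C_0(X)$ act nondegenerately on $C_0(Y)$. By Gelfand duality $A_0=C_0(Z)$ for an intermediate extension $Z$ of $\pi$, and the inclusion $A_0\rtimes_r\Gamma\subseteq B$ is immediate. The substance is the reverse inclusion $B\subseteq A_0\rtimes_r\Gamma$, for which I would bring in two tools. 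The AP furnishes a net $(u_i)$ of finitely supported functions on $\Gamma$ whose associated $C_0(Y)$-bimodular, uniformly completely bounded multipliers $m_{u_i}(\sum_h b_h\lambda_h)=\sum_h u_i(h)b_h\lambda_h$ converge to the identity in the point-norm topology; this is precisely the feature of AP, as opposed to mere amenability, that makes the Fourier expansion of elements of $C_0(Y)\rtimes_r\Gamma$ norm-recoverable. Freeness of the action on $X$, together with properness of $\pi$, furnishes for every finite $F\subseteq\Gamma$ and compact $K\subseteq X$ a partition of unity $\{f_j\}\subseteq C_c(X)$ subordinate to a cover by sets $V_j$ with $kV_j\cap V_j=\emptyset$ for all $k\in F\setminus\{e\}$; pulled back through $\pi$ these sit inside $C_0(X)\subseteq B$ and retain the same separation after applying any $k\in F\setminus\{e\}$.

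The heart of the argument is then to isolate single Fourier components inside $B$. For fixed $g$ I would consider the completely bounded maps $S(b):=\sum_j f_j\,b\,(g^{-1}\cdot\overline{f_j})$, which have norm at most one by a Cauchy--Schwarz estimate and, crucially, send $B$ into $B$ because each $f_j$ and each $g^{-1}\cdot\overline{f_j}$ lies in $C_0(X)\subseteq B$. A direct computation shows that on an element supported in $Fg$ the separation property kills every off-diagonal contribution and leaves $(\sum_j|f_j|^2)\,b_g\lambda_g$; feeding in the Fejér approximant $m_{u_i}(b)$ to control the infinite tail and letting the partition exhaust $X$, one obtains $b_g\lambda_g\in B$ for every $g$. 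To strip the unitary I would multiply on the left by $c\lambda_{g^{-1}}\in C_0(X)\rtimes_r\Gamma\subseteq B$, giving $c\,(g^{-1}\cdot b_g)\in B\cap C_0(Y)=A_0$, and then run $c$ through an approximate unit of $C_0(X)$ to conclude $g^{-1}\cdot b_g\in A_0$, hence $b_g\in A_0$ by invariance. Finally, since $m_{u_i}(b)=\sum_h u_i(h)b_h\lambda_h$ is a finite sum of elements $u_i(h)b_h\lambda_h\in A_0\rtimes_r\Gamma$ and converges in norm to $b$, I would conclude $b\in A_0\rtimes_r\Gamma$.

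I expect the main obstacle to be the analytic bookkeeping in the component-extraction step: one must interleave three limiting processes---the AP net $(u_i)$, the mesh of the separating cover, and the exhaustion $\sum_j|f_j|^2\nearrow1$---while keeping the maps $S$ uniformly contractive, and verify that the pulled-back partition genuinely approximates $b_g\in C_0(Y)$ despite living on the smaller space $X$, which is again a consequence of properness. A subtler conceptual point worth isolating as a lemma is the exact consequence of AP that is invoked, namely norm rather than merely weak$^\ast$ convergence of the Fejér means on the reduced crossed product; passing from the defining weak$^\ast$ approximation to a point-norm approximating net is where the completely bounded nature of the multipliers, rather than positivity, becomes indispensable.
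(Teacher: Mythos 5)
Your proposal is correct and its core mechanism is the same as the paper's: freeness supplies, for each finite $F\subset\Gamma$, a partition of unity on $X$ subordinate to a cover by sets $V$ with $kV\cap V=\emptyset$ for $k\in F\setminus\{e\}$, and the associated completely bounded map $b\mapsto\sum_j f_j\,b\,(g^{-1}.\overline{f_j})$ (the paper's $\Phi(a)=\sum_\i h_\i^{1/2}ah_\i^{1/2}$ is your map with $g=e$) kills off-diagonal Fourier terms and shows the Fourier coefficients of any element of an intermediate algebra $\D$ again lie in $\D$; the AP then reassembles. The differences are organizational rather than substantive. First, the paper extracts only the zeroth coefficient, proving $E(a)\in\Cs(a,C(X)\rtimes_r\Gamma)$, and then delegates the reassembly to Proposition~\ref{proposition:critical} (Suzuki's AP result), whereas you extract every coefficient $b_g\lambda_g$ directly with the $g$-twisted map and re-derive the content of that proposition inline via Fej\'er means; your closing caveat about upgrading the weak$^*$ approximation of the AP to point-norm convergence (by convexity of the set of finitely supported multipliers) is exactly the point that proposition encapsulates, so citing it would spare you that lemma. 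Second, the paper treats only the compact case analytically and reduces the non-compact case to it by one-point compactification together with Lemma~\ref{lemma:unitization}, while you run the estimate directly on $C_0$-algebras by interleaving an exhaustion $\sum_j|f_j|^2\nearrow 1$ with the AP net and the mesh of the cover; this works (properness guarantees $\pi^*(C_c(X))\subset C_c(Y)$ acts non-degenerately and that $b_g$ is small off $\pi^{-1}(K)$ for suitable compact $K$), but the compactification route avoids one of your three simultaneous limits. Note also that in the free case the paper's Tietze-extension step and the neighborhoods $U_k^0$ of the fixed-point sets disappear, so your argument is precisely the free-case specialization of the paper's proof of Theorem~\ref{maintheorem}.
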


The freeness condition cannot be removed in general (see \cite[Proposition 2.6]{suzuki2018complete}).
The following theorem generalizes the above result of Suzuki by relaxing the freeness condition. 
\begin{theorem}[Theorem \ref{maintheorem}]
  Let $\Gamma$ be a discrete group satisfying the AP.
  Let $X$, $Y$ be $\Gamma$-spaces and $\pi$ be a proper factor map from $Y$ to $X$
  with the following condition$\colon$for every element $x$ in $X$ fixed by some non-neutral element of $\Gamma$, one has $|\pi^{-1}(x)| = 1$.
  Then the map
  \begin{align*}
    C_0(Z) \mapsto C_0(Z) \rtimes_r \Gamma
  \end{align*}
  gives a lattice isomorphism between the lattice of intermediate extensions of $\pi$ and that of intermediate \Cs-algebras of $C_0(X) \rtimes_r \Gamma \subset C_0(Y) \rtimes_r \Gamma$.
\end{theorem}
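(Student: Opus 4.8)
The plan is to construct the inverse of $C\mapsto C\rtimes_r\Gamma$ by intersecting with the coefficient algebra and to reduce the whole statement to a single averaging claim, which is exactly where the hypothesis on $\pi$ enters. Throughout, let $E\colon C_0(Y)\rtimes_r\Gamma\to C_0(Y)$ be the canonical conditional expectation and, for $s\in\Gamma$, let $E_s(a)=E(a\lambda_s^{*})\in C_0(Y)$ be the $s$-th Fourier coefficient, so that $a$ is determined by the family $(E_s(a))_s$. For a $\Gamma$-invariant intermediate algebra $C_0(X)\subset C\subset C_0(Y)$ one has $E(C\rtimes_r\Gamma)=C=(C\rtimes_r\Gamma)\cap C_0(Y)$, so the assignment $C\mapsto C\rtimes_r\Gamma$ is injective and order preserving; once it is shown to be an order bijection it is automatically a lattice isomorphism, and no separate check on joins and meets is required. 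It thus suffices, given an intermediate $\Cs$-algebra $C_0(X)\rtimes_r\Gamma\subset A\subset C_0(Y)\rtimes_r\Gamma$, to produce $C$ with $A=C\rtimes_r\Gamma$.

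Set $C:=A\cap C_0(Y)$. Using an approximate unit $(u_i)$ of $C_0(X)$, which by properness of $\pi$ is also an approximate unit of $C_0(Y)$, the elements $u_i\lambda_s$ lie in $C_0(X)\rtimes_r\Gamma\subset A$; multiplying on either side shows both that $C$ is a $\Gamma$-invariant $\Cs$-subalgebra with $C_0(X)\subset C\subset C_0(Y)$ and that $C\rtimes_r\Gamma\subset A$. For the reverse inclusion I must show that every Fourier coefficient of every $a\in A$ lies in $C$. Since $\lambda_{s^{-1}}a\in A$ and $E(\lambda_{s^{-1}}a)=s^{-1}\cdot E_s(a)$, $\Gamma$-invariance of $C$ reduces everything to the following.
\begin{claim}
For every $a\in A$ one has $E(a)\in A$.
\end{claim}
Granting the Claim, $E(a)\in A\cap C_0(Y)=C$, and the approximation property, which forces $a=\lim_i m_{\phi_i}(a)$ in norm for a net of finitely supported multipliers $\phi_i$, exhibits $a$ as a norm limit of finite combinations $\sum_s\phi_i(s)E_s(a)\lambda_s$ of homogeneous components lying in $C\rtimes_r\Gamma$; hence $A\subset C\rtimes_r\Gamma$, and $A=C\rtimes_r\Gamma$.

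To prove the Claim I approximate $E$ by maps that visibly preserve $A$. For a finite family $(g_j)\subset C_0(X)$ put $\Psi(b)=\sum_j g_j\,b\,g_j^{*}$; since each $g_j\in C_0(X)\subset A$ one has $\Psi(A)\subset A$. A direct computation using $\lambda_t g=(t\cdot g)\lambda_t$ gives $\Psi(f\lambda_t)=\Theta(t)\,f\lambda_t$, where the symbol $\Theta(t)(y)=\sum_j g_j(\pi(y))\overline{g_j(t^{-1}\pi(y))}$ depends only on $x=\pi(y)$. Thus $\Psi$ acts diagonally on Fourier coefficients, and it will approximate $E$ along a suitable net as soon as $\Theta(e)=\sum_j|g_j\circ\pi|^{2}\to1$ while $\Theta(t)\to0$ off the identity, in a sense strong enough for the finitely many $t$ that remain relevant after an application of the approximation property.

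The behaviour of $\Theta(t)$ for $t\neq e$ is governed by the hypothesis. Away from $\Fix(t)$ the points $x$ and $t^{-1}x$ are distinct, and on the free part the action is free (stabilizers in $Y$ inject into those in $X$), so the $g_j$ may be chosen as a subordinate partition pushing $\supp g_j$ off its $t$-translate and forcing $\Theta(t)\to0$ there; this is the mechanism of Suzuki's free case. On $\Fix(t)$, however, $t^{-1}x=x$ forces $\Theta(t)=\Theta(e)\to1$, and no averaging can annihilate this part. Here the hypothesis rescues the argument: every $x\in\Fix(t)$ lies in the non-free part, so $|\pi^{-1}(x)|=1$ and $\pi$ restricts to a $\Gamma$-equivariant homeomorphism of $Y^{\mathrm{nf}}=\pi^{-1}(X^{\mathrm{nf}})$ onto $X^{\mathrm{nf}}$. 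Consequently $E_t(a)$ is, near such a fibre, the pullback of a function on $X$, so the corresponding piece of $a$ is locally an element of $C_0(X)\rtimes_r\Gamma\subset A$ and needs no averaging. The Claim then follows by splitting $a$, through a $\Gamma$-invariant cut-off subordinate to the free and non-free loci, into a part handled by the maps $\Psi$ and a part recognised inside $C_0(X)\rtimes_r\Gamma$, and passing to the limit via the approximation property to absorb the infinitely many Fourier directions. The main obstacle is precisely this splitting: the non-free locus $\bigcup_{s\neq e}\Fix(s)$ is only $F_\sigma$ and its closure may meet the free locus, so the cut-off together with the simultaneous control of $\Theta(t)$ for the finitely many relevant $t$ must be carried out locally on compact sets and globalised through the approximation property, rather than by a single clean decomposition of $X$ into invariant open pieces.
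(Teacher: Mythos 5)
Your overall architecture matches the paper's: set $C=A\cap C_0(Y)$, reduce everything via the AP to the single claim $E(a)\in C^*(a,\,C_0(X)\rtimes_r\Gamma)$, and prove that claim by averaging over elements of $C_0(X)$, with the hypothesis on $\pi$ entering exactly where you say it does (over $\Fix(t)$ the averaging is powerless, but injectivity of $\pi$ on the non-free part makes the offending Fourier coefficient descend to $X$ there). However, the proof of the Claim is where the actual work lies, and you stop short of it: you explicitly flag that the splitting of $a$ into a ``free'' part killed by $\Psi$ and a ``non-free'' part recognised in $C_0(X)\rtimes_r\Gamma$ is the main obstacle, note correctly that the non-free locus $\bigcup_{s\neq e}\Fix(s)$ is only $F_\sigma$ so no $\Gamma$-invariant cut-off exists, and then assert the construction ``must be carried out locally on compact sets and globalised through the approximation property'' without doing it. That is a genuine gap, not a routine verification: the invariant-cut-off route you sketch does fail for the reason you give, and something different is needed.

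The paper's resolution avoids any global free/non-free decomposition. After approximating $a$ by a finite sum $\sum_{k=0}^n f_k u_{t_k}$ (compact case first; the non-compact case is reduced to it by one-point compactification together with a separate lemma matching intermediate algebras of $A\subset B$ with those of $A+C\subset B+C$ for $C\subset M(A)$), one only ever has to deal with the finitely many \emph{closed} sets $\Fix_Y(t_k)$, $k=1,\dots,n$. For each $k$: since $\Fix_Y(t_k)\subset\mathcal{S}_Y$ and $\pi$ restricts to a bijection $\mathcal{S}_Y\to\mathcal{S}_X$ with $\pi^{-1}(\pi(\Fix_Y(t_k)))=\Fix_Y(t_k)$, Tietze gives $\tilde f_k\in C(X)$ with $f_k=\pi_*(\tilde f_k)$ on $\Fix_Y(t_k)$; a separate topological lemma (if $K=\pi^{-1}(\pi(K))$ is compact and $U\supset K$ is open in $Y$, there is an open $V\supset\pi(K)$ in $X$ with $\pi^{-1}(V)\subset U$) upgrades the $Y$-neighborhood on which $|f_k-\pi_*(\tilde f_k)|<\e/n$ to the preimage of an $X$-neighborhood $U_k^0$ of $\pi(\Fix_Y(t_k))$. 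On $X\setminus U_k^0$ the element $t_k$ acts freely, so a finite cover by sets $U_k^j$ with $t_kU_k^j\cap U_k^j=\emptyset$ exists; the simultaneity over $k$ that you worry about is handled by indexing a single partition of unity by the product $J_1\times\cdots\times J_n$ of these covers. With $\Phi(b)=\sum_{\i}h_\i^{1/2}bh_\i^{1/2}$ one gets $\Phi(a)-\sum_{k\ge1}\tilde f_k g_k u_{t_k}\approx_{3\e}E(a)$ with the subtracted term in $C(X)\rtimes_r\Gamma$, which proves the Claim. So the missing step is fillable, but not by the invariant-cut-off mechanism you propose; it requires the Tietze-extension argument on each $\Fix_Y(t_k)$ and the neighborhood-pushforward lemma, neither of which appears in your outline.
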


\subsection*{Notation}
Throughout this paper, $\Gamma$ denotes a discrete group.
The symbol `$\rtimes_r$' stands for the reduced \Cs-crossed product.
For $g \in \Gamma$, denote by $\lambda_g$ the unitary element of the reduced group \Cs-algebra ${\rm C}_r^*\Gamma$ correponding to $g$. 
For a unital $\Gamma$-\Cs-algebra $A$ and $g \in \Gamma$, denote $u_g$ the canonical implementing unitary element of $g$ in the reduced crossed product of $A \rtimes_r \Gamma$. 
For two elements $a$, $b$ of a \Cs-algebra, we denote $a \approx_\e b$ if $\|a-b\| < \e$.
We denote by $\N$ the set of positive integers.
Also, we denote by $\N_0$ the union of $\N$ and $\{0\}$. 
\subsection*{Acknowledgements}
The author would like to thank his supervisor, Professor Narutaka Ozawa for his support and many valuable comments.
He also thanks Professor Yuhei Suzuki for valuable comments and suggesting Example \ref{example:suzuki} and the arguments following it. 

\section{Preliminaries}
We say $X$ is a $\Gamma$-space if $X$ is a locally compact space equipped with a $\Gamma$-action by homeomorphisms. 

\begin{definition}
  For a $\Gamma$-space $X$ and an element $g$ of $\Gamma$, 
  we denote by $\Fix_X(g)$ the set of fixed points of $g$,
  i.e. $\{x \in X\mid g . x = x\}$.
  Also, we denote by $\mathcal{S}_X$ the set of all elements in $X$ which have non-trivial stabilizers. 
\end{definition}

\begin{definition}
  Let $X$, $Y$ be $\Gamma$-spaces. 
  A map $\pi$ from $Y$ to $X$ is said to be a factor map if it is a $\Gamma$-equivariant quotient map from $Y$ to $X$.
  We also refer to $\pi$ as an extension.
  A factor map is proper if the preimage of every compact set is compact.
\end{definition}

The approximation property (AP) has been introduced for locally compact groups by Haagerup--Kraus \cite{haagerup1994approximation}. 
In the discrete case, the AP is weaker than weak amenability and stronger than exactness.
See \cite{haagerup1994approximation} and Section 12 of \cite{brown2008c} for details.

Let $X$ be a $\Gamma$-space.
There exists a canonical conditional expectation $E$
from $C_0(X) \rtimes_r \Gamma $ onto $C_0(X)$
defined by 
\begin{align*}
  E(fu_g) =   
  \left\{
  \begin{array}{l}
    f \ \ \mbox{if} \, g = e\\
    0 \ \ \mbox{if} \, g \neq e, 
  \end{array}
  \right.
\end{align*}
for $f \in C_0(X)$ and $g \in \Gamma$.
Note that $E$ is faithful (see \cite{brown2008c}, Chapter 4.1). 

The following proposition plays an important role in the proof of the main theorem.
\begin{proposition}[\cite{suzuki2017group}, Proposition 3.4] \label{proposition:critical} 
  Let $\Gamma$ be a group satisfying the ${\rm AP}$.
  Let $A$ be a $\Gamma$-\Cs-algebra and let $X$ be a closed subspace of $A$.
  Let $x$ be an element of $A \rtimes_r \Gamma$ satisfying $E(xu_g^*) \in X$ for all $g \in \Gamma$.
  Then $x$ is contained in the closed subspace
  \begin{align*}
    X \rtimes_r \Gamma := \overline{\rm span}\{x u_g \mid x \in X, g \in \Gamma\}. 
  \end{align*}
\end{proposition}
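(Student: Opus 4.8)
The plan is to exploit the approximation property through the multiplier maps it induces on the crossed product. Recall that, by Haagerup--Kraus (see \cite{haagerup1994approximation} and Section 12 of \cite{brown2008c}), the AP of $\Gamma$ furnishes, for the given action on $A$, a net $(\phi_i)$ of functions $\phi_i \colon \Gamma \to \C$ which may be taken finitely supported since $\Gamma$ is discrete, such that the associated multiplier maps $m_{\phi_i} \colon A \rtimes_r \Gamma \to A \rtimes_r \Gamma$, determined on the canonical dense $\ast$-subalgebra by $m_{\phi_i}(a u_g) = \phi_i(g)\, a u_g$, are completely bounded with $\sup_i \|m_{\phi_i}\|_{\mathrm{cb}} < \infty$ and converge to the identity in the point-norm topology, i.e. $\|m_{\phi_i}(y) - y\| \to 0$ for every $y \in A \rtimes_r \Gamma$. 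I would take this as the sole input from the AP and show that each $m_{\phi_i}(x)$ already lies in $X \rtimes_r \Gamma$; since that subspace is norm-closed by its very definition, the conclusion then follows by passing to the limit.

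The key computation identifies $m_{\phi_i}(x)$ with an explicit finite sum. Writing $\widehat{y}(g) := E(y u_g^*) \in A$ for the $g$-th Fourier coefficient of an element $y$, one checks on the dense $\ast$-subalgebra and then extends by continuity of $E$ and $m_{\phi_i}$ that $E\bigl(m_{\phi_i}(x)\, u_g^*\bigr) = \phi_i(g)\, E(x u_g^*)$ for every $g$. Thus the Fourier coefficients of $m_{\phi_i}(x)$ are $g \mapsto \phi_i(g) E(x u_g^*)$, which vanish outside the finite set $\supp(\phi_i)$. Since an element of $A \rtimes_r \Gamma$ with finitely many nonzero Fourier coefficients coincides with the sum of its coefficient terms, one obtains
\begin{align*}
  m_{\phi_i}(x) = \sum_{g \in \supp(\phi_i)} \phi_i(g)\, E(x u_g^*)\, u_g.
\end{align*}
By the hypothesis $E(x u_g^*) \in X$ for all $g$, each summand $\phi_i(g) E(x u_g^*) u_g$ belongs to $X \rtimes_r \Gamma = \overline{\rm span}\{\xi u_g \mid \xi \in X, g \in \Gamma\}$, and hence so does the finite sum $m_{\phi_i}(x)$.

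The two points requiring care, which I regard as the crux, are structural rather than computational. First, one must legitimately extract from the AP a net of finitely supported multipliers that converges point-norm on $A \rtimes_r \Gamma$; this is exactly the content of the Haagerup--Kraus machinery, and the finite support of each $\phi_i$ is precisely what turns $m_{\phi_i}(x)$ into a genuine finite sum rather than a merely formal Fourier series. Second, one must justify that an element with finitely supported Fourier coefficients equals the sum of its coefficient terms, which rests on the faithfulness of $E$: if $z := m_{\phi_i}(x) - \sum_{g} \phi_i(g) E(x u_g^*) u_g$ satisfies $E(z u_g^*) = 0$ for all $g$, then $E(z^* z) = \sum_g \alpha_{g^{-1}}\bigl(E(z u_g^*)^* E(z u_g^*)\bigr) = 0$, whence $z = 0$ by faithfulness of $E$. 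Once these two facts are in place the argument closes at once, since $X \rtimes_r \Gamma$ is closed and $m_{\phi_i}(x) \to x$ in norm forces $x \in X \rtimes_r \Gamma$.
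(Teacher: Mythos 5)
Your argument is correct and is essentially the proof of this proposition in its source: the paper itself gives no proof, importing the statement verbatim from \cite[Proposition 3.4]{suzuki2017group}, and Suzuki's proof there is exactly your scheme --- finitely supported Herz--Schur-type multipliers $m_{\phi_i}$ converging to the identity, the identification of $m_{\phi_i}(x)$ with the finite sum $\sum_g \phi_i(g) E(xu_g^*)u_g$ via faithfulness of $E$ (equivalently, injectivity of the Fourier coefficient map on $A \rtimes_r \Gamma$), and norm-closedness of $X \rtimes_r \Gamma$. One inaccuracy in your write-up: the AP does \emph{not} furnish a net with $\sup_i \|m_{\phi_i}\|_{\mathrm{cb}} < \infty$ --- a uniformly bounded net is precisely weak amenability, which is strictly stronger; what Haagerup--Kraus \cite{haagerup1994approximation} give is convergence of $\phi_i \to 1$ in the stable point-weak$^*$ topology, which one upgrades to point-norm convergence $m_{\phi_i}(y) \to y$ on each fixed element by passing to convex combinations (Hahn--Banach/Mazur). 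Since you never actually use the uniform bound, this does not affect the validity of the proof, but the claim as stated should be corrected.
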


\section{Main theorem}
In this section, we prove the Main Theorem (Theorem \ref{maintheorem}).
\begin{proposition}\label{proposition:bijection}
  Let $X$, $Y$ be $\Gamma$-spaces and $\pi$ be a $\Gamma$-equivariant quotient map from $Y$ to $X$.
  Assume for every element $x \in \mathcal{S}_X$, one has $|\pi^{-1}(x)| = 1$.
  Then for every subset $C$ of $\mathcal{S}_Y$, one has $\pi^{-1}\circ \pi(C) = C$.
\end{proposition}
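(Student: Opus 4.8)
The plan is to note first that for any map and any subset the inclusion $C \subseteq \pi^{-1}(\pi(C))$ is automatic, so the entire content of the statement is the reverse inclusion $\pi^{-1}(\pi(C)) \subseteq C$. The strategy for this inclusion is to transfer the non-triviality of stabilizers from $Y$ down to $X$ using $\Gamma$-equivariance, and then invoke the single-point-fiber hypothesis over $\mathcal{S}_X$. No topological input (in particular, not the properness of $\pi$) should be needed for this particular statement; only equivariance and the hypothesis on fibers over $\mathcal{S}_X$ enter.

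Concretely, I would fix an arbitrary $y \in \pi^{-1}(\pi(C))$, so that $\pi(y) = \pi(c)$ for some $c \in C \subseteq \mathcal{S}_Y$. Since $c \in \mathcal{S}_Y$, there is a non-neutral $g \in \Gamma$ with $g.c = c$. The key step is to push this down: by $\Gamma$-equivariance of $\pi$ one has $g.\pi(c) = \pi(g.c) = \pi(c)$, whence $\pi(c) \in \Fix_X(g)$ with $g \neq e$, and therefore $\pi(c) \in \mathcal{S}_X$. Now the hypothesis gives $|\pi^{-1}(\pi(c))| = 1$; as $c$ already lies in this fiber, we get $\pi^{-1}(\pi(c)) = \{c\}$. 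Since $y$ belongs to the same fiber, $y = c \in C$. Because $y$ was arbitrary, this yields $\pi^{-1}(\pi(C)) \subseteq C$ and hence equality.

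I do not expect a genuine obstacle here, as the argument reduces to the single observation that $\pi$ maps $\mathcal{S}_Y$ into $\mathcal{S}_X$. The only point deserving care is the transfer of the stabilizing element of $c$ to a stabilizing element of $\pi(c)$, which is exactly where equivariance is used and which justifies applying the hypothesis at the point $\pi(c)$ rather than at $c$ itself; once this is in place the conclusion is immediate from $|\pi^{-1}(\pi(c))| = 1$.
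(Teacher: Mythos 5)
Your proof is correct and rests on the same key observation as the paper's: $\Gamma$-equivariance sends $\mathcal{S}_Y$ into $\mathcal{S}_X$, after which the hypothesis $|\pi^{-1}(x)|=1$ for $x\in\mathcal{S}_X$ collapses each fiber over $\pi(C)$ to the single point of $C$ it already contains. Your version is in fact slightly more direct than the paper's, which reduces the claim to bijectivity of the restriction $\pi|_{\mathcal{S}_Y}\colon\mathcal{S}_Y\to\mathcal{S}_X$ and proves its surjectivity --- a step the stated conclusion does not actually need.
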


\begin{proof}
  Let $\varphi \colon \mathcal{S}_Y \to \mathcal{S}_X$ be the restriction of $\pi$ to $\mathcal{S}_Y$.
  It suffices to show that $\varphi$ is bijective.
  For every $x \in \mathcal{S}_X$, there exists $e \neq g \in \Gamma$, $x = g.x$.
  Since $\pi$ is surjective, there exists $y \in Y$ satisfying $\pi(y) = x$.
  If $g.y \neq y$, then one has $|\pi^{-1}(x)| \ge 2$, which contradicts the assumption.
  Hence, $g.y = y$, that is $ y \in \mathcal{S}_Y$.
  Thus, $\varphi$ is surjective.
  Injectivity follows from the assumption.
\end{proof}

\begin{lemma}\label{lemma:important}
  Let $X$, $Y$ be  compact Hausdorff spaces and $\pi$ be a continuous map from $Y$ to $X$. 
  Let $K$ be a compact subset of $Y$ satisfying $K = \pi^{-1}\circ \pi(K)$ and $U$ be an open neighborhood of $K$.
  Then there exists an open neighborhood $V$ of $\pi(K)$ satisfying $\pi^{-1} (V) \subset U$. 
\end{lemma}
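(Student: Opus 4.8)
The plan is to produce the neighborhood $V$ by passing to complements and using compactness, so that the "bad" directions of $Y$ (those lying outside $U$) are projected away. The natural candidate is to take $V$ to be the complement in $X$ of the image under $\pi$ of $Y \setminus U$. The whole argument then reduces to three short verifications: that this $V$ is open, that it contains $\pi(K)$, and that its preimage sits inside $U$.

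First I would note that $Y \setminus U$ is closed in $Y$, hence compact, so its continuous image $\pi(Y \setminus U)$ is a compact subset of $X$; since $X$ is Hausdorff, $\pi(Y \setminus U)$ is closed, and therefore $V := X \setminus \pi(Y \setminus U)$ is open. The key step, and the one place where the saturation hypothesis $K = \pi^{-1}\circ\pi(K)$ is actually used, is to check that $\pi(K) \subseteq V$, i.e. that $\pi(K)$ and $\pi(Y \setminus U)$ are disjoint. Indeed, if some $x$ belonged to both, then $x = \pi(y)$ for some $y \in Y \setminus U$, while $x \in \pi(K)$ would force $y \in \pi^{-1}(x) \subseteq \pi^{-1}\circ\pi(K) = K \subseteq U$, contradicting $y \notin U$. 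I expect this disjointness check to be the main (albeit brief) obstacle: the construction of $V$ is essentially forced once one thinks to take complements, but it only does its job because $K$ is saturated, and the implication collapses entirely if that hypothesis is dropped.

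Finally I would confirm $\pi^{-1}(V) \subseteq U$ directly: if $\pi(y) \in V$, then $\pi(y) \notin \pi(Y \setminus U)$, so $y$ cannot lie in $Y \setminus U$, whence $y \in U$. Combining this with the inclusion $\pi(K) \subseteq V$ established above exhibits $V$ as an open neighborhood of $\pi(K)$ with $\pi^{-1}(V) \subseteq U$, as required.
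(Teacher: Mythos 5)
Your proof is correct, and it takes a genuinely different route from the paper. You construct the neighborhood explicitly as $V = X \setminus \pi(Y \setminus U)$: openness follows because $\pi$ is a closed map (continuous image of the compact set $Y\setminus U$ in the Hausdorff space $X$), the inclusion $\pi(K)\subseteq V$ is exactly where the saturation $K=\pi^{-1}\circ\pi(K)$ enters, and $\pi^{-1}(V)\subseteq U$ is immediate from the definition of $V$. The paper instead argues by contradiction with nets: it takes a decreasing net $\{V_i\}$ of open sets intersecting to $\pi(K)$, chooses points $y_i\notin U$ with $\pi(y_i)\in V_i$, extracts a convergent subnet by compactness of $Y$, and derives a contradiction from the limit point landing in $K\subseteq U$. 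Your argument is the standard ``closed-map'' proof of this tube-type lemma; it is more direct, avoids the choice of the net $\{V_i\}$ and the slightly delicate step of concluding $\pi(y)\in\pi(K)$ from $\pi(y_j)\to\pi(y)$ with $\pi(y_j)\in V_j$, and makes completely transparent which hypothesis is responsible for which conclusion (compactness of $Y$ plus Hausdorffness of $X$ give openness of $V$; saturation of $K$ gives $\pi(K)\subseteq V$). The paper's net argument buys nothing extra here, so your version is preferable on clarity grounds.
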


\begin{proof}
  Suppose there exists no such open neighborhood.
  Since $X$ is Hausdorff, there exists a decreasing net $\{V_i\}_{i \in I}$ of open sets such that $\bigcap_i V_i = \pi(K)$.
  By the assumption,  for every $i \in I$, one has $\pi^{-1}(V_i) \not\subset U$.
  Hence, there exist $x_i \in V_i$ and $y_i \in Y$ such that $\pi(y_i) = x_i$ and $y_i \not\in U$.
  Since $Y$ is compact, there exists a subnet $\{y_j\}_{j \in J}$ and $y \in Y$ such that $y_j \to y$.
  Since $\pi$ is continuous,
  one has $\pi(y_j) \to \pi(y)$.
  Then one has $\pi(y) \in \pi(K)$.
  Hence, one has $y \in K$.
  Since $U$ is an open neighborhood of $K$,
  there exists $j_0 \in J$ such that $y_{j_0} \in U$.
  This contradicts $y_j \not\in U$ for every $j \in J$.
\end{proof}

We say that $A\subset B$ is a non-degenerate inclusion of \Cs-algebras if every (or, equivalently, some) approximate unit of $A$ is an approximate unit of $B$. 
If $A\subset B$ is non-degenerate, the inclusion extends to an inclusion $M(A) \subset M(B)$ (see \cite[Proposition 2.1]{lance1995hilbert}).

We use the following lemma to show the non-compact case of Theorem \ref{maintheorem}.

\begin{lemma}\label{lemma:unitization}
  Let $A\subset B$ be a non-degenerate inclusion of \Cs-algebras.
  Let $C$ be a \Cs-subalgebra of the multiplier algebra $M(A)$.
  The map
  \begin{align*}
    \D \mapsto \D+C =: \D^\wedge
  \end{align*}
  defines a one-to-one correspondence between intermediate \Cs-algebras $A \subset \D \subset B$
  and intermediate \Cs-algebras $A+C \subset \D \subset B+C$, 
  with its inverse map given by
  \begin{align*}
    \D \mapsto \D \cap B =: \D^\vee.
  \end{align*}
\end{lemma}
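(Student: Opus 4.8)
The plan is to place everything inside the multiplier algebra $M(B)$. Since $A \subset B$ is non-degenerate, the inclusion extends to $M(A) \subset M(B)$, so $A$, $B$, $C$, and every intermediate algebra $\D$ sit inside $M(B)$ and all the sums below are taken there. Throughout I would exploit that, by non-degeneracy, an approximate unit $(e_\lambda)$ of $A$ is simultaneously an approximate unit of $B$, and hence of every C*-algebra $\D$ with $A \subset \D \subset B$.

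The first substantial step is to check that $\D^\wedge = \D + C$ is genuinely a C*-algebra between $A + C$ and $B + C$. Linearity and closure under adjoint are immediate, since $A$, $\D$, $C$ are all self-adjoint. For closure under multiplication the only issue is the cross terms $cd$ and $dc$ with $c \in C$, $d \in \D$: writing $cd = \lim_\lambda (c e_\lambda) d$ and noting $c e_\lambda \in A \subset \D$ (because $c \in M(A)$) shows $cd \in \D$, and symmetrically $dc \in \D$. Thus $\D$ is a closed two-sided ideal of the C*-algebra $\overline{\D + C} \subset M(B)$, while $C$ is a C*-subalgebra of it. Invoking the standard fact that the sum of a closed ideal and a C*-subalgebra is itself a closed C*-subalgebra, I conclude that $\D + C$ is already closed, so $\D + C = \overline{\D + C}$ is a C*-algebra; the inclusions $A + C \subset \D + C \subset B + C$ are clear, and the cases $\D = A$ and $\D = B$ show that $A + C$ and $B + C$ are themselves C*-algebras. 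Likewise $\D^\vee = \D \cap B$ is obviously a C*-algebra with $A \subset \D \cap B \subset B$, so both maps land in the intended lattices.

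The second step is the key computational fact $C \cap B \subset A$: if $c \in C \subset M(A)$ also lies in $B$, then $c e_\lambda \in A$ while $c e_\lambda \to c$ (as $(e_\lambda)$ is an approximate unit of the C*-algebra $B$ containing $c$), whence $c \in A$. With this in hand the two composites are routine. For $A \subset \D \subset B$ one has $\D \subset (\D + C) \cap B$ trivially; conversely, any $x = d + c \in B$ with $d \in \D$ and $c \in C$ forces $c = x - d \in C \cap B \subset A \subset \D$, so $x \in \D$, giving $(\D^\wedge)^\vee = \D$. For $A + C \subset \D \subset B + C$ one has $(\D \cap B) + C \subset \D$ because $C \subset A + C \subset \D$; conversely, writing $x \in \D \subset B + C$ as $x = b + c$ with $b \in B$ and $c \in C \subset \D$ gives $b = x - c \in \D \cap B$, so $x \in (\D \cap B) + C$, giving $(\D^\vee)^\wedge = \D$.

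I expect the main obstacle to be the first step, namely verifying that $\D + C$ is norm-closed. This is precisely where the non-degeneracy hypothesis does its work: it is what allows an approximate unit of $A$ to absorb arbitrary elements of $\D$ and thereby forces $C \cdot \D \subset \D$ and $\D \cdot C \subset \D$, turning $\D$ into an ideal so that the ideal-plus-subalgebra closedness theorem applies. Once that is established, the entire correspondence reduces to the single identity $C \cap B \subset A$ together with the elementary decompositions above.
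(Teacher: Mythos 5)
Your proof is correct and follows essentially the same route as the paper: non-degeneracy supplies a common approximate unit for $A$ and $B$, the closedness of $\D+C$ comes from the closed-ideal-plus-subalgebra fact (which the paper phrases as $\D+C=Q_\D^{-1}(Q_\D(C))$ inside $M(\D)$), and both composites reduce to multiplying by that approximate unit. Your isolation of the single identity $C\cap B\subset A$ is a slightly cleaner packaging of the paper's approximate-unit manipulations in the two composite computations, but it is the same underlying argument rather than a different one.
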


\begin{proof}
  Let $\D$ be an intermediate \Cs-algebra of $A \subset B$.
  We first note that $C \subset M(\D)$ and
  $\D^\wedge = \D+C = Q_\D^{-1} \circ Q_\D(C) \subset M(\D)$ is a \Cs-subalgebra,
  where $Q_\D \colon M(\D) \to M(\D)/\D$ is the quotient map.
  Let $\{u_i\}$ be an approximate unit of $A$.
  By the non-degeneracy of the inclusion, $\{u_i\}$ is an approximate unit of $B$.

  Since $\D \subset B$, one has $\D \subset (\D^\wedge)^\vee $.
  Let $a$ be an element of $(\D^\wedge)^\vee$.
  Since $\D^\wedge = \D+C$, there exist $d \in \D$ and $c \in C$ such that $a = d+c$.
  Since $u_i c \in A$, one has $u_i(d+c) \in \D + A = \D$.
  Since $\{u_i\}$ is an approximate unit for $B$,
  one has $a \in \D$.
  Hence, one has $(\D^\wedge)^\vee = \D$.

  Let $\D$ be  an intermediate \Cs-algebra of $A+C \subset B+C$.
  Since $C \subset \D$, one has $(\D^\vee)^\wedge \subset \D$.
  Let $a$ be an element of $\D$.
  Since $a \in B+C$, there exist $b \in B$ and $c \in C$ such that $a = b+c$.
  Since $u_i b \in B$ and $u_i c \in A$, one has $u_i(b+c) \in B \cap \D$.
  Since $c - u_i c \in C+A$, one has $u_i b + c = u_i(b+c) + (c - u_i c) \in (\D \cap B) + C = (\D^\vee)^\wedge$.
  Hence, one has $(\D^\vee)^\wedge$.
\end{proof}

\begin{theorem}\label{maintheorem}
  Let $\Gamma$ be a discrete group satisfying the {\rm AP}.
  Let $X$, $Y$ be $\Gamma$-spaces and $\pi$ be a proper factor map from $Y$ to $X$
  such that $|\pi^{-1}(x)| = 1$ for every element $x$ of $\mathcal{S}_X$.
  We regard $C_0(X)$ as a $\Gamma$-\Cs-subalgebra of $C_0(Y)$ via $\pi$.
  Then the map
  \begin{align*}
    C_0(Z) \mapsto C_0(Z) \rtimes_r \Gamma
  \end{align*}
  gives a lattice isomorphism between the lattice of intermediate extensions of $\pi$ and that of intermediate \Cs-algebras of $C_0(X) \rtimes_r \Gamma \subset C_0(Y) \rtimes_r \Gamma$.
\end{theorem}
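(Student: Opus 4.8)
The plan is to first establish the theorem for compact $X$, $Y$ and then reduce the general (proper, non-compact) case to it. Throughout I write $E$ for the canonical faithful expectation onto $C(Y)$, and I record the two facts that make the compact case tractable: $C(X)$ embeds into $C(X)\rtimes_r\Gamma$, and each $u_g=1\cdot u_g$ lies there too, so $C(X)$ and all $u_g$ are contained in \emph{any} intermediate algebra $\D$. The easy directions go as follows. The assignment $C_0(Z)\mapsto C_0(Z)\rtimes_r\Gamma$ lands between the two crossed products by functoriality of the reduced crossed product for $\Gamma$-equivariant inclusions. Since $E$ restricts to the canonical expectation on each $C_0(Z)\rtimes_r\Gamma$ and is faithful, one recovers $C_0(Z)=E(C_0(Z)\rtimes_r\Gamma)=(C_0(Z)\rtimes_r\Gamma)\cap C_0(Y)$. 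This yields injectivity and monotonicity in both directions, so the map is automatically an isomorphism of posets, hence of lattices, onto its image. Everything therefore reduces to surjectivity.

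For surjectivity in the compact case, let $\D$ be intermediate and set $C(Z):=\D\cap C(Y)$, a \Cs-subalgebra containing $C(X)$; it is $\Gamma$-invariant because each $u_g\in\D$ conjugates $\D\cap C(Y)$ into itself. By Proposition \ref{proposition:critical} and faithfulness of $E$ it suffices to prove $E(\D)\subset\D$: applying this to $du_g^*\in\D$ gives $E(du_g^*)\in\D\cap C(Y)=C(Z)$, whence $\D=C(Z)\rtimes_r\Gamma$. So fix $a\in\D$ and $\e>0$ and write $a\approx_\e\sum_{h\in F}f_hu_h$ with $f_e=E(a)$; I must produce an element of $\D$ near $f_e$. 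The decisive constraint is that the only conjugators guaranteed to lie in $\D$ are pullbacks $\pi^*\eta\in C(X)$, and such functions cannot separate anything over $\mathcal{S}_X$. To set up the averaging, put $K_F:=\bigcup_{h\in F\setminus\{e\}}\Fix_Y(h)$, which is compact and, as $K_F\subset\mathcal{S}_Y$, is $\pi$-saturated with $\pi|_{K_F}$ injective by Proposition \ref{proposition:bijection}. Choose $\bar f_h\in C(X)$ with $\pi^*\bar f_h=f_h$ on $K_F$ (Tietze, through the homeomorphism $\pi|_{K_F}$) and an open $U\supset K_F$ on which $\|f_h-\pi^*\bar f_h\|<\e$ for all $h\in F$; then Lemma \ref{lemma:important} furnishes an open $V\supset\pi(K_F)$ with $\pi^{-1}(V)\subset U$. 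Off $V$ the action of each $h\in F\setminus\{e\}$ is free, so by compactness one builds a partition of unity $\{\eta_i^2\}_{i\ge0}$ on $X$ with $\eta_0$ supported in $V$ and, for $i\ge1$, $\supp\eta_i\cap h\cdot\supp\eta_i=\emptyset$ for every $h\in F\setminus\{e\}$.

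Now set $\xi_i:=\pi^*\eta_i\in C(X)\subset\D$ and $\Phi(a):=\sum_i\xi_i a\xi_i\in\D$. Using $u_h\xi_i=(h.\xi_i)u_h$ and $\sum_i\eta_i^2=1$, a short computation gives $\Phi(a)\approx f_e+\sum_{h\neq e}\xi_0\,(h.\xi_0)\,f_hu_h$, the contributions from $i\ge1$ vanishing by disjointness of supports. Each surviving coefficient is supported in $\pi^{-1}(\supp\eta_0)\subset\pi^{-1}(V)\subset U$, where $\|f_h-\pi^*\bar f_h\|<\e$; hence that term is within $\e$ of $\xi_0\,(h.\xi_0)\,(\pi^*\bar f_h)\,u_h$, a product of functions in $C(X)$ times $u_h$, which lies in $\D$. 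Consequently $f_e=E(a)$ is within $O(|F|\e)$ of $\Phi(a)$ minus an element of $\D$; since $\D$ is closed, letting $\e\to0$ gives $E(a)\in\D$, completing the compact case.

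For the general case I pass to the one-point compactifications $X^+\subset Y^+$, to which the proper map $\pi$ extends continuously as $\pi^+$; the added fixed point $\infty$ has singleton preimage by properness, so $\pi^+$ still satisfies the fiber hypothesis over $\mathcal{S}_{X^+}$. Applying the compact case to $\pi^+$ and transporting back via Lemma \ref{lemma:unitization}, with $A=C_0(X)\rtimes_r\Gamma$, $B=C_0(Y)\rtimes_r\Gamma$, and $C=\mathrm{C}^*_r\Gamma\subset M(A)$ (so that $A+C$ and $B+C$ are exactly the compactified crossed products), yields the full statement. I expect the averaging step to be the main obstacle. In the free case one separates supports directly in $Y$, but here the conjugators available inside $\D$ are only pullbacks from $X$, which are powerless over $\mathcal{S}_X$; the entire role of the hypothesis $|\pi^{-1}(x)|=1$ on $\mathcal{S}_X$ is that over precisely this inseparable region $\pi$ is injective, so the stubborn off-diagonal coefficients can be replaced by pullbacks and absorbed into $\D$, while Lemma \ref{lemma:important} is what confines those coefficients to a saturated neighborhood small enough for the pullback approximation to hold.
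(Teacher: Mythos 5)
Your proposal is correct and follows essentially the same route as the paper: reduce $E(a)\in\D$ to an averaging map $\Phi(a)=\sum_i \xi_i a\xi_i$ built from a partition of unity on $X$ that isolates a saturated neighborhood of the fixed-point sets (via Proposition \ref{proposition:bijection} and Lemma \ref{lemma:important}), replace the off-diagonal coefficients there by Tietze extensions pulled back from $X$, invoke Proposition \ref{proposition:critical}, and handle the non-compact case by one-point compactification together with Lemma \ref{lemma:unitization} with $C=\mathrm{C}^*_r\Gamma$. The only (harmless) cosmetic difference is that you build a single partition of unity adapted to the union $\bigcup_{h\neq e}\Fix_Y(h)$, whereas the paper intersects covers indexed by each group element separately.
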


\begin{proof}
  We first show this theorem when $X$ and $Y$ are compact. 
  Let $E$ be the canonical conditional expectation from $C(Y) \rtimes_r \Gamma$ to $C(Y)$.
  Let $a$ be an element of $C(Y) \rtimes_r \Gamma$.
  Let  $\varepsilon > 0$ be given.
  There exist $n \in \N \cup \{0\}$, $t_k \in \Gamma$, and $f_k \in C(Y)$ $( k \in \{0, \ldots , n\})$
  such that $\| a - \sum_{k=0}^n f_k u_{t_k}\| < \e$, $t_0 = e$, and $t_k \neq e$ $( k \in \{1, \ldots, n\})$.
  
  Since $\Fix_X(t)$ is closed for every $t \in \Gamma$, for each $k \in \{1,2,\ldots, n\}$, there exists $\tilde{f_k} \in C(X)$ such that $f_k = \pi_\ast(\tilde{f_k})$ on $\Fix_Y(t_k)$ by the Tietze extension theorem and Lemma \ref{proposition:bijection}.

  For each $k \in \{1,\dots, n\}$, since $\Fix_Y(t_k)$ is compact, there exists an open set $W_k$ of $Y$ such that $\Fix_Y(t_k) \subset W_k$ and $|f_k - \pi_*(\tilde{f_k})| < \e/n$ on $W_k$.
  By Proposition \ref{proposition:bijection} and Lemma \ref{lemma:important}, there exists an open neighborhood $U_k^0$ of $\pi(\Fix_Y(t_k)) $ in $X$ such that $\pi^{-1}(U_k^0) \subset W_k$.
  
  For each $k \in \{1, 2, \ldots, n\}$, $t_k$ acts on $X \setminus U_k^0$ freely.
  Hence, there exist a finite subset $J_k' \subset \N$ and a finite open covering $\{U_k^j\}_{j \in J_k'}$ of $X \setminus U_k^0$ such that for every $j \in J_k'$,
  $t_k U_k^j \cap U_k^j = \emptyset$.
  Let $J_k = J_k' \cup \{0\} $ and $I = J_1 \times J_2 \times \cdots \times J_n$.
  For every $\i = (j_1, j_2,  \ldots, j_n) \in I$, we define $V_\i$ to be  $U_1^{j_1} \cap U_2^{j_2} \cap \cdots \cap U_n^{j_n}$. 
  We remark that for every $k \in \{1, 2, \ldots, n\}$ and every $\i = (j_1, j_2,  \ldots, j_n) \in I$ with $j_k \neq 0$, 
  one has $t_k U_\i \cap U_\i = \emptyset$.
  We also have $\bigcup_{ \substack{\i = (j_1, j_2,  \ldots, j_n)\\ j_k = 0}} U_\i= U_k^0$.
  For the open covering $\{V_\i\}_{\i \in I}$ of $X$, there exists $h_\i \in C(X)^+$ such that $\sum_{\i \in I} h_\i = 1$ and $\supp(h_\i) \subset V_\i$ $(\i \in I)$.
  We remark that for every $k \in \{1, 2, \ldots, n\}$ and every $\i = (j_1, j_2,  \ldots, j_n) \in I$ with $j_k \neq 0$,
  one has $h_\i(t_k.h_\i) = 0$.
  We define a c.c.p map $\Phi \colon C(Y) \rtimes_r \Gamma \to C(X) \rtimes_r \Gamma$ by the map $a \mapsto \sum_{\i \in I} h_\i^{\frac{1}{2}}a h_\i^{\frac{1}{2}}$.
  For each $k \in \{1, 2, \ldots, n\}$,
  we define $g_k := \Phi(u_{t_k}) u_{t_k}^*= \sum_{\i \in I}  h_\i^{\frac{1}{2}} (t_k. h_\i^{\frac{1}{2}}) \in C(X)$.
  For $ k \neq 0$, since $\supp(g_k) \subset U_k^0$ and $\|g_k\| \leq 1$, 
  one has $f_k g_k \approx_{\e/n} \tilde{f_k} g_k$.
  Therefore, 
  \begin{align*}
    \Phi(a) &\approx_\e \Phi( \sum_{k=0}^n f_k u_{t_k} )\\
    &= \sum_{k=0}^n f_k g_ku_{t_k} = f_0 +  \sum_{k=1}^n f_k g_ku_{t_k}\\
    &\approx_{(\e/n)\cdot n} f_0 +\sum_{k=1}^n \tilde{f_k} g_k u_{t_k} 
  \end{align*}
  Since $\Phi(a) \in C^*(a, C(X))$ and $\sum_{k=1}^n\tilde{f_k} g_k u_{t_k} \in C(X) \rtimes_r \Gamma$,
  one has
  \begin{align*}
    E(a) &\approx_\e f_0 \approx_{2\e} \Phi(a) - \sum_{k=1}^n \tilde{f_k} g_k u_{t_k} \in C^*(a, C(X) \rtimes_r \Gamma).
  \end{align*}
  Since $\e > 0$ was arbitrary,
  one has $E(a) \in  C^*(a, C(X) \rtimes_r \Gamma)$.
  
  Let $\D$ be an intermediate \Cs-algebra of $C(X) \rtimes_r \Gamma \subset C(Y) \rtimes_r \Gamma$.
  Then, by the result shown in the previous paragraph, one has $E(\D) \subset \D$.
  By Proposition \ref{proposition:critical}, for every intermediate \Cs-algebra $\D$ of $C(X) \rtimes_r \Gamma \subset C(Y) \rtimes_r \Gamma$,
  one has $\D = E(\D) \rtimes_r \Gamma$.
  
  Next, we show this theorem in the case where $X$ and $Y$ are noncompact.
  Let $\tilde{X} = X \sqcup \{x_\infty\}$, $\tilde{Y} = Y \sqcup \{y_\infty\}$ be the one-point compactifications of $X$, $Y$ respectively.
  Let $\tilde{\pi} \colon \tilde{Y} \to \tilde{X}$ denote the continuous extension of $\pi$.
  Since $\tilde{\pi}^{-1}(\{x_\infty\}) = \{y_\infty\}$, $\tilde{\pi}$ satisfies the assumption of this theorem.

  We will use a one-to-one correspondence between intermediate \Cs-algebras of $C_0(X)\rtimes_r \Gamma \subset C_0(Y) \rtimes_r \Gamma$ and
  that of $C( \tilde{X})\rtimes_r \Gamma \subset C(\tilde{Y}) \rtimes_r \Gamma$.
  Let $ \tilde{E}$ be the canonical conditional expectation from $C( \tilde{Y}) \rtimes_r \Gamma$ to $C(\tilde{Y})$.
  Let $\D$ be an intermediate \Cs-algebra of $C_0(X) \rtimes_r \Gamma \subset C_0(Y) \rtimes_r \Gamma$.
  Since $\D^\wedge := \D +  {\rm C}_r^*\Gamma$ is an intermediate \Cs-algebra of $C( \tilde{X}) \rtimes_r \Gamma \subset C( \tilde{Y}) \rtimes_r \Gamma$,
  one has $\D^\wedge = \tilde{E}( \D^\wedge) \rtimes_r \Gamma$.
  Let $(\D^\wedge)^\vee = \D^\wedge \cap (C_0(Y) \rtimes_r \Gamma)$. 
  By Lemma \ref{lemma:unitization} and Proposition \ref{proposition:critical}, one has
  \begin{align*}
    \D &= (\D^\wedge)^\vee\\
    &= ( \tilde{E}( \D^\wedge) \rtimes_r \Gamma) \cap (C_0(Y) \rtimes_r \Gamma) \\
    &= (\tilde{E}( \D^\wedge) \cap C_0(Y)) \rtimes_r \Gamma.
  \end{align*}
\end{proof}

\section{Examples}
\begin{example}[branched covering]
  Let $X$ be a complex plane $\C$.
  Let $k$ be an integer greater than or equal to $2$ and
  $Y = \{(z, w) \in \C \times \C \mid w^k = z\} $ be a Riemann surface of the $k$-th square root.
  Let $\pi$ be a projection from $Y$ to $X$,
  i.e., $Y \ni (z,w) \mapsto z \in X$.
  Let $\alpha \in \R\setminus \Q$.
  We define $\Z$-actions on $X$, $Y$
  by the following:
  for each $n \in \Z$, 
  \begin{align*}
    X \ni z &\mapsto ze^{2k\pi\alpha n} \in X \\
    Y \ni (z, w) &\mapsto (ze^{2k\pi\alpha n} , we^{2\pi\alpha n}) \in Y.
  \end{align*}
  Then $\pi$ is a $\Z$-equivariant proper quotient map and for every $n \in \Z$, one has $\Fix_Y(n) = \{(0,0)\}$ and $\Fix_X(n) = \{0\}$.
\end{example}

We show that the assumption of $\pi$ in Theorem \ref{maintheorem} is closed under taking the direct product.
It follows from the Whitehead lemma \cite{whitehead1948note} that $\pi_1 \times \pi_2$ in Lemma \ref{lemma:product} is quotient.
For the reader's convenience, we include the proof.
\begin{lemma}\label{lemma:invopen}
  Let $Y$ be a topological space and $X$ be a locally compact space.
  Let $\pi$ be a surjective proper continuous map and $U$ be a subset of $X$.
  If $\pi^{-1}(U)$ is open, $U$ is open. 
\end{lemma}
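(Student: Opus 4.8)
The plan is to reduce the statement to the classical fact that a proper continuous map into a locally compact Hausdorff space is closed, and then to exploit surjectivity. First I would note that showing $U$ is open is equivalent to showing that its complement $X \setminus U$ is closed. Since $\pi$ is continuous and $\pi^{-1}(U)$ is open, the set $\pi^{-1}(X \setminus U) = Y \setminus \pi^{-1}(U)$ is a closed subset of $Y$. Moreover, because $\pi$ is surjective, one checks directly the identity $\pi(Y \setminus \pi^{-1}(U)) = X \setminus U$: the inclusion $\subseteq$ holds because $y \notin \pi^{-1}(U)$ means precisely $\pi(y) \notin U$, while $\supseteq$ follows by choosing, for each $x \notin U$, a preimage $y$ with $\pi(y) = x$, which then lies outside $\pi^{-1}(U)$. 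Hence it suffices to know that the image under $\pi$ of this closed set is again closed.

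The heart of the argument is therefore to establish that $\pi$ is a closed map. Let $C \subseteq Y$ be closed; I would show that $X \setminus \pi(C)$ is open. Fix $x \notin \pi(C)$. Using local compactness of $X$, choose a compact neighborhood $K$ of $x$. By properness, $\pi^{-1}(K)$ is compact, so $C \cap \pi^{-1}(K)$ is a closed subset of a compact set and hence compact; consequently $\pi(C \cap \pi^{-1}(K))$ is compact, and therefore closed since $X$ is Hausdorff. A short set-theoretic verification gives the equality $\pi(C) \cap K = \pi(C \cap \pi^{-1}(K))$, so $\pi(C) \cap K$ is closed. Then $\mathrm{int}(K) \setminus \pi(C) = \mathrm{int}(K) \setminus (\pi(C) \cap K)$ is an open neighborhood of $x$ disjoint from $\pi(C)$, which proves that $X \setminus \pi(C)$ is open and hence that $\pi(C)$ is closed.

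Combining the two parts, $X \setminus U = \pi(Y \setminus \pi^{-1}(U))$ is closed, so $U$ is open, as desired. The main obstacle is the closedness of $\pi$; the surjectivity bookkeeping and the reduction to complements are purely formal. The one point requiring care is that one must have the Hausdorff property available in order to pass from ``compact'' to ``closed'' for the image $\pi(C \cap \pi^{-1}(K))$, which is guaranteed by the standing convention that the locally compact spaces considered here are Hausdorff.
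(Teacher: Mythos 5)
Your proof is correct. It takes a somewhat different route from the paper's: you first establish the general fact that a proper continuous map into a locally compact Hausdorff space is closed, and then conclude by writing $X \setminus U = \pi\bigl(Y \setminus \pi^{-1}(U)\bigr)$, which is the image of a closed set. The paper instead argues locally and directly on $U$: it fixes $x \in U$, takes a relatively compact open neighborhood $V$ of $x$, and shows $U \cap V$ is open by exhibiting its complement as the union of the two closed sets $\overline{V} \cap (U\cap V)^{c}$ (closed because it equals $\pi\bigl(\pi^{-1}(\overline{V}) \cap \pi^{-1}(U\cap V)^{c}\bigr)$, which is compact by properness) and $V^{c}$. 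The underlying mechanism is identical in both arguments --- intersect with a compact neighborhood, pull back by properness to a compact set, intersect with the closed set $\pi^{-1}(U)^{c}$, and push forward to get a compact, hence closed, set --- but your version isolates the closedness of $\pi$ as a reusable standalone statement, while the paper inlines the computation for the specific set at hand. Your observation that Hausdorffness of $X$ is needed to pass from compact to closed is apt; the paper's proof uses the same step implicitly, and Hausdorffness is indeed part of the paper's standing convention for locally compact spaces (it works throughout with $C_0(X)$).
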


\begin{proof}
  Take an element $x$ in $U$.
  Let $V$ be a relative compact open neighborhood of $x$.
  It suffices to show $U \cap V$ is open. 
  Since $\pi^{-1}(U\cap V) = \pi^{-1}(U) \cap \pi^{-1}(V)$,
  $\pi^{-1}(U\cap V)$ is open.
  By continuity and properness,
  $\overline{V} \cap (U \cap V)^c = \pi( \pi^{-1}(\overline{V}) \cap \pi^{-1}(U \cap V)^c)$ is compact.
  We remark that
  \begin{align*}
    X = (U \cap V) \sqcup (( \overline{V} \cap( U \cap V)^c) \cup V^c). 
  \end{align*}
  Since $ \overline{V} \cap( U \cap V)^c$ and $V^c$ is closed, $(( \overline{V} \cap( U \cap V)^c) \cup V^c$ is closed.
  Then $U \cap V$ is open.
\end{proof}

\begin{lemma} \label{lemma:product}
  For each $i \in \{1, 2\}$, let $X_i$, $Y_i$ be $\Gamma$-spaces
  and $\pi\colon Y_i \to X_i$ be a proper factor maps.
  We see $X_1 \times X_2$ and $Y_1 \times Y_2$ are $\Gamma$-spaces with diagonal actions. 
  Then $\pi := \pi_1 \times \pi_2\colon  Y_1 \times Y_2 \to X_1 \times X_2$ is a proper factor map.  
\end{lemma}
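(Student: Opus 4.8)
The plan is to verify, one at a time, the three properties packaged into ``proper factor map'': $\Gamma$-equivariance, the quotient-map property, and properness. Only the second of these carries genuine content.

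First I would dispose of equivariance, continuity, and surjectivity, each of which passes to products coordinatewise. For $g \in \Gamma$ and $(y_1,y_2) \in Y_1 \times Y_2$, the diagonal actions give $\pi(g.(y_1,y_2)) = (\pi_1(g.y_1),\pi_2(g.y_2)) = g.\pi(y_1,y_2)$ by equivariance of each $\pi_i$; and a finite product of continuous (resp.\ surjective) maps is continuous (resp.\ surjective). I would also record that $X_1 \times X_2$ and $Y_1 \times Y_2$ are locally compact Hausdorff, being finite products of such spaces.

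Next I would establish properness. Given a compact set $K \subset X_1 \times X_2$, let $K_i$ be the image of $K$ under the $i$-th coordinate projection, so that $K_i$ is compact and $K \subset K_1 \times K_2$. Then $\pi^{-1}(K) \subset \pi_1^{-1}(K_1) \times \pi_2^{-1}(K_2)$, and the right-hand side is compact by properness of each $\pi_i$ together with Tychonoff's theorem. Since $K$ is closed (being compact in a Hausdorff space), $\pi^{-1}(K)$ is a closed subset of this compact box, hence compact.

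The main obstacle --- and the only step where the hypotheses do real work --- is the quotient-map property, because a product of quotient maps need not be a quotient map in point-set topology. Here the device is Lemma \ref{lemma:invopen}. Having already shown that $\pi$ is surjective, proper, and continuous with locally compact target $X_1 \times X_2$, I would apply that lemma (with $Y = Y_1 \times Y_2$ and $X = X_1 \times X_2$) to conclude that every $U \subset X_1 \times X_2$ with $\pi^{-1}(U)$ open is itself open. The reverse implication is just continuity, so $U$ is open if and only if $\pi^{-1}(U)$ is open, which is precisely the quotient-map characterization. This completes the verification that $\pi$ is a proper factor map.
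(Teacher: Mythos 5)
Your proof is correct and follows essentially the same route as the paper's: properness via the coordinate projections and the compact box $\pi_1^{-1}(K_1)\times\pi_2^{-1}(K_2)$, and the quotient property via Lemma \ref{lemma:invopen} applied to the already-established surjective proper continuous map onto the locally compact product. You are in fact slightly more careful than the paper, which leaves implicit that $\pi^{-1}(K)$ is a \emph{closed} subset of that compact box.
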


\begin{proof}
  It suffices to check properness and quotientness.
  We will show properness.
  Let $C \subset Y_1 \times Y_2$ be a compact subset.
  For each $i \in \{1,2\}$, let $p_i\colon  Y_1 \times Y_2 \to Y_i$ be $i$-th projections.
  By the $\pi_i$'s continuity, for each $i \in \{1,2\}$, $p_i(C)$ is compact.
  Since for each $i$, $\pi_i^{-1}(p_i(C))$ is compact,
  $\pi^{-1}(C) \subset \pi_1^{-1}(p_1(C)) \times \pi_2^{-1}(p_2(C))$ is compact.

  We will show quotientness.
  Let $U \subset X$.
  Since $\pi$ is continuous, if $U$ is open, then $\pi^{-1}(U)$ is open.
  By Lemma \ref{lemma:invopen}, if $\pi^{-1}(U)$ is open, then $U$ is open.
\end{proof}

\begin{example}
  For each $i \in \{1,2\}$, let $X_i$, $Y_i$ be $\Gamma$-spaces and $\pi \colon Y_i \to X_i$ be a $\Gamma$-equivariant proper factor map.
  By the above lemma,
  if for every $i \in \{1,2\}$, $\pi_i$ is a proper factor map such that $|\pi_i^{-1}(x)| = 1$ for every element $x$ in $\mathcal{S}_{X_i}$, 
  then
  so is $\pi_1 \times \pi_2 \colon Y_1 \times Y_2 \to X_1 \times X_2$.
\end{example}

We show that some compactifications satisfy the assumption in the main theorem.
For a locally compact space $X$, we denote by $\beta X$ the Stone-$\check{{\rm C}}$ech compactification of $X$. 
The following proposition is easily seen from the universal property of the Stone-$\check{{\rm C}}$ech compactification and Lemma \ref{lemma:invopen}. 
\begin{proposition}
  Let $\varphi \colon Y \to X$ be a proper quotient map between locally compact spaces $X$ and $Y$.
  Let $\tilde{X} = X \cup \d X$ be a compactification of $X$.
  Then there exists a quotient map $\beta \varphi \colon \beta Y \to \tilde{X}$ extending $\varphi$.
\end{proposition}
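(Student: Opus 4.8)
The plan is to build $\beta\varphi$ via the universal property of the Stone-$\check{\rm C}$ech compactification and then to verify the two properties that make up the conclusion—surjectivity and quotientness—with the quotient part handled through Lemma \ref{lemma:invopen}.

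First I would compose $\varphi$ with the inclusion $\iota\colon X \hookrightarrow \tilde{X}$ to obtain a continuous map $\iota\circ\varphi\colon Y \to \tilde{X}$ into the compact Hausdorff space $\tilde{X}$. Since $Y$ is locally compact Hausdorff, hence Tychonoff, the canonical embedding $Y \hookrightarrow \beta Y$ lets me invoke the universal property: $\iota\circ\varphi$ extends uniquely to a continuous map $\beta\varphi\colon \beta Y \to \tilde{X}$ whose restriction to $Y$ is $\iota\circ\varphi$, i.e. which extends $\varphi$. This settles continuity and the extension property at once.

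Next I would check surjectivity. The image $\beta\varphi(\beta Y)$ is the continuous image of a compact space, hence compact, and therefore closed in the Hausdorff space $\tilde{X}$. It contains $\beta\varphi(Y) = X$ because $\varphi$ is surjective, and $X$ is dense in $\tilde{X}$ by the definition of a compactification. Passing to closures gives $\beta\varphi(\beta Y) \supseteq \overline{X} = \tilde{X}$, so $\beta\varphi$ is onto. Note that this step rests on the density of $X$ in $\tilde{X}$, not on properness of $\varphi$.

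The part that needs the most care—and the only candidate for an obstacle—is seeing that $\beta\varphi$ is a quotient map, and here I would apply Lemma \ref{lemma:invopen} with $\pi = \beta\varphi$. The source $\beta Y$ and the target $\tilde{X}$ are both compact Hausdorff; in particular $\tilde{X}$ is locally compact, and $\beta\varphi$ is proper since the preimage of any compact, hence closed, subset of $\tilde{X}$ is closed in the compact space $\beta Y$ and thus compact. Together with the surjectivity already established, the hypotheses of Lemma \ref{lemma:invopen} are met, so $\beta\varphi^{-1}(U)$ open implies $U$ open; combined with continuity of $\beta\varphi$ this is precisely the assertion that $\beta\varphi$ is a quotient map. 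I expect no genuine difficulty: the two points worth stating explicitly are that $\tilde{X}$ must be taken compact Hausdorff so that the universal property applies, and that it is the compactness of the two spaces, rather than the properness of $\varphi$, that drives the quotient argument.
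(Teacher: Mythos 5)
Your proposal is correct and follows exactly the route the paper indicates (it gives no written proof, merely noting that the statement ``is easily seen from the universal property of the Stone-$\check{\rm C}$ech compactification and Lemma \ref{lemma:invopen}''): extend $\iota\circ\varphi$ by the universal property, get surjectivity from density of $X$ in $\tilde X$, and get quotientness from Lemma \ref{lemma:invopen} applied to the proper surjection $\beta\varphi\colon\beta Y\to\tilde X$. Your write-up supplies the details the paper omits, and correctly observes that compactness of $\beta Y$ and $\tilde X$, not properness of $\varphi$, is what drives the quotient step.
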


\begin{remark}
  By the above proposition, one has
  \begin{align*}
    (\beta \varphi) (\beta Y \setminus Y) = \d X \text{ and } \beta \varphi (Y) = X.
  \end{align*}
  It follows that $(\beta \varphi)_* (C( \tilde{X})) \cap C_0(Y) = (\beta \varphi)_*(C_0(X))$.
\end{remark}

\begin{proposition}\label{compactification}
  With the same assumptions as the previous proposition,
  we define $\tilde{Y}$ to be the character space of the \Cs-subalgebra of $C(\beta Y)$ generated by $(\beta \varphi)_*(C(\tilde{X}))$ and $C_0(Y)$.
  Then one has
  \begin{align*}
    C(\tilde{Y})/C_0(Y) \cong \beta \varphi_* (C ( \d X)).
  \end{align*}
\end{proposition}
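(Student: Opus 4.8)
The plan is to realize $C(\tilde Y)$ as the sum of the unital subalgebra $(\beta\varphi)_*(C(\tilde X))$ and the ideal $C_0(Y)$, and then to read off the quotient by combining the second isomorphism theorem with the identity already recorded in the Remark. So first I would set $A := C(\tilde Y)$, i.e. the $C^*$-subalgebra of $C(\beta Y)$ generated by $B := (\beta\varphi)_*(C(\tilde X))$ and $J := C_0(Y)$. Since $C(\beta Y) = C_b(Y)$ and $C_0(Y)$ is a closed ideal of $C_b(Y)$, and since $J \subset A$, the set $J$ is a closed ideal of $A$; in particular the quotient $C(\tilde Y)/C_0(Y)$ is meaningful.

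The key structural point is that $A$ does not grow beyond the sum $B+J$. Indeed, the sum of a $C^*$-subalgebra and a closed ideal is again a $C^*$-subalgebra: $B+J$ is the preimage under the quotient map $C(\beta Y) \to C(\beta Y)/J$ of the image of $B$, and that image is closed because a $\ast$-homomorphism has closed range on a $C^*$-algebra. Hence $B+J$ is a $C^*$-subalgebra containing both generators, and therefore $A = B+J$. I expect this to be the only genuinely delicate point; once it is in place, the rest is bookkeeping.

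Next I would apply the second isomorphism theorem for $C^*$-algebras to obtain
\[ C(\tilde Y)/C_0(Y) = (B+J)/J \cong B/(B\cap J). \]
Here the Remark does the essential work: it gives $B \cap J = (\beta\varphi)_*(C(\tilde X)) \cap C_0(Y) = (\beta\varphi)_*(C_0(X))$. Because the preceding proposition supplies $\beta\varphi$ as a quotient map, hence surjective, the pullback $(\beta\varphi)_*$ is an injective and therefore isometric $\ast$-homomorphism; it identifies $B \cong C(\tilde X)$ and carries $B\cap J$ onto the ideal $C_0(X) \subset C(\tilde X)$. Consequently
\[ B/(B\cap J) \cong C(\tilde X)/C_0(X) \cong C(\tilde X \setminus X) = C(\d X), \]
the last isomorphism being restriction of functions to the closed complement $\d X = \tilde X \setminus X$.

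Finally I would match this with the stated right-hand side. By the Remark, $\beta\varphi$ restricts to a continuous surjection $\beta Y \setminus Y \to \d X$, so its pullback embeds $C(\d X)$ isometrically as $\beta\varphi_*(C(\d X))$; chaining this with the isomorphisms above yields $C(\tilde Y)/C_0(Y) \cong \beta\varphi_*(C(\d X))$, as desired.
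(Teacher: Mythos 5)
Your proof is correct and follows essentially the same route as the paper: the paper's proof is exactly the chain $C(\tilde Y)/C_0(Y) = (B+J)/J \cong B/(B\cap J) = B/(\beta\varphi)_*(C_0(X)) \cong \beta\varphi_*(C(\d X))$, and you have simply supplied the justifications (that $B+J$ is closed so $C(\tilde Y)=B+J$, the second isomorphism theorem, and the Remark's identification of $B\cap J$) that the paper leaves implicit.
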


\begin{proof}
  \begin{align*}
    C(\tilde{Y})/C_0(Y) &= ((\beta \varphi)_*(C(\tilde{X})) + C_0(Y))/C_0(Y)\\
    &\cong (\beta \varphi)_*(C(\tilde{X}))/(\beta \varphi)_*(C(\tilde{X})) \cap C_0(Y)\\
    &= (\beta \varphi)_*(C(\tilde{X}))/\varphi_*(C_0(X))\\
    &\cong \beta \varphi_* (C ( \d X)).
  \end{align*}
\end{proof}

\begin{remark}
  Let $\d Y$ to be $\tilde{Y} \setminus Y$. 
  Then one has
  \begin{align*}
    C(\d Y) \cong \beta \varphi_* (C ( \d X)) \text{ and } \beta \varphi(\d Y) \cong \d X.
  \end{align*}
\end{remark}

For a compact space $X$ and an open subset $U$ of $X$, 
we consider the \Cs-subalgebra of $\ell_\infty(X)$ generated by $C(X)$ and the characteristic function $\chi_{\overline{U}}$.

\begin{proposition}
  Let $X$ be a compact space and $U$ be an open subset of $X$.
  Let $p = \chi_{\overline{U}} \in l_\infty(X)$,
  Then one has $ l_\infty(X) \supset \Cs(C(X), p) \cong C( \overline{U}) \oplus C(X \setminus U)$
\end{proposition}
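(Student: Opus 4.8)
The plan is to exploit that the ambient algebra $\ell_\infty(X)$ is commutative, so that $A := \Cs(C(X), p)$ is a commutative \Cs-algebra in which $p$ is a \emph{central} projection. First I would record that $p$ is self-adjoint with $p^2 = p$, hence a projection, and that it commutes with every $f \in C(X)$ because multiplication in $\ell_\infty(X)$ is pointwise. Consequently
\begin{align*}
  A = pA \oplus (1-p)A
\end{align*}
as a direct sum of \Cs-algebras, and it suffices to identify the two corners $pA$ and $(1-p)A$ with $C(\overline U)$ and $C(X\setminus U)$ respectively.

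For the identification I would pass to Gelfand spectra. Writing $A \cong C(\Omega)$ for the character space $\Omega$ of $A$, the inclusion $C(X)\hookrightarrow A$ is injective and therefore dualizes to a continuous surjection $q\colon \Omega\to X$; pairing $q$ with the map $\omega\mapsto\omega(p)\in\{0,1\}$ yields a continuous injection $\Omega\to X\times\{0,1\}$, which is a homeomorphism onto its image since $\Omega$ is compact and $X\times\{0,1\}$ is Hausdorff (injectivity holds because $C(X)$ together with $p$ generates $A$ and hence separates characters). As every nonzero element of $A\subset\ell_\infty(X)$ is nonzero at some point of $X$, the point-evaluations $\{\mathrm{ev}_x\}_{x\in X}$ are weak-$*$ dense in $\Omega$, so the image is exactly the closure of $\{(x, p(x)) : x\in X\}$. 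Because the two slices $X\times\{0\}$ and $X\times\{1\}$ are clopen, this closed set splits as the closure of $\{x : p(x)=1\}$ at level $1$ together with the closure of $\{x : p(x)=0\}$ at level $0$; identifying these level-set closures with $\overline U$ and $X\setminus U$ then gives $A\cong C(\overline U)\oplus C(X\setminus U)$ by Gelfand duality.

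As a concrete cross-check of the two corners I would argue directly by restriction: the assignment $pf\mapsto f|_{\overline U}$ defines a $*$-homomorphism $pA\to C(\overline U)$, surjective by the Tietze extension theorem and injective since $pf=0$ precisely when $f$ vanishes on the support $\overline U$ of $p$; the analogous restriction on the complementary corner handles $C(X\setminus U)$. The step requiring the most care is pinning down these two level sets and their closures as exactly $\overline U$ and $X\setminus U$ — equivalently, checking surjectivity of the restriction maps onto the \emph{full} algebras $C(\overline U)$ and $C(X\setminus U)$ together with their injectivity on the respective corners — since this is precisely where the interplay between $U$, its closure, and its complement enters. Once the clopen decomposition of $\Omega$ is established, assembling the direct sum is routine.
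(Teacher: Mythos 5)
Your overall strategy --- split $A=\Cs(C(X),p)$ as $pA\oplus(1-p)A$ using that $p$ is a central projection in the commutative algebra $\ell_\infty(X)$, then identify each corner with an algebra of continuous functions via restriction, with Tietze giving surjectivity --- is exactly the paper's argument (the paper proves $C(X)\chi_V\cong C(\overline V)$ for an arbitrary $V\subset X$ and applies it to the two corners; your Gelfand-spectrum detour is additional but harmless machinery, and your density claim for the point evaluations is correct). The first corner is unproblematic: $pA=C(X)\chi_{\overline U}\cong C(\overline U)$ because $\overline U$ is already closed.

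The gap sits precisely at the step you yourself flag as ``requiring the most care'', and it does not go away. The complementary corner is $(1-p)A=C(X)\chi_{X\setminus\overline U}$, and both your restriction map and your spectrum computation identify it with $C\bigl(\overline{X\setminus\overline U}\bigr)=C\bigl(X\setminus\mathrm{int}(\overline U)\bigr)$, \emph{not} with $C(X\setminus U)$. These agree if and only if $U$ is regular open, i.e.\ $U=\mathrm{int}(\overline U)$. In general the map $(1-p)f\mapsto f|_{X\setminus U}$ is not even well defined: a function vanishing on $X\setminus\overline U$ (so that $(1-p)f=0$) can be nonzero at a point of $(X\setminus U)\cap\mathrm{int}(\overline U)$. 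Concretely, for $X=[0,1]$ and $U=(0,\tfrac12)\cup(\tfrac12,1)$ one has $\overline U=[0,1]$, hence $p=1$ and $\Cs(C(X),p)=C([0,1])$, whereas $C(\overline U)\oplus C(X\setminus U)=C([0,1])\oplus\C^3$; these are not isomorphic. So what your argument actually establishes is $\Cs(C(X),p)\cong C(\overline U)\oplus C\bigl(\overline{X\setminus\overline U}\bigr)$. For what it is worth, the paper's own proof makes the same silent identification $\overline{X\setminus\overline U}=X\setminus U$ in its last line, so this is really a defect of the statement: it holds as written only for regular open $U$, or with $C(X\setminus U)$ replaced by $C\bigl(\overline{X\setminus\overline U}\bigr)$.
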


\begin{proof}
  Let $V$ be a subset of $X$.
  Let $q = \chi_V$ be the characteristic function in  $\ell_\infty(X)$.
  We will show $C(X)q \cong C( \overline{V})$.
  We define $\varphi \colon C(X)q \to C( \overline{V})$ by $fq \mapsto f |_{\overline{V}}$ for $f \in C(X)$.
  The well-definedness and the injectivity of $\varphi$ follows from continuity.
  The surjectivity of $\varphi$ follows from the Tietze's extension theorem.
  Hence, $\varphi$ is an isomorphism.
  
  Hence, one has $\Cs(C(X), p) \cong C(X)p \oplus C(X)(1-p) \cong C( \overline{U}) \oplus C(X \setminus U)$.
\end{proof}

\begin{remark}
  Under the above assumption,
  let $\tilde{X}$ to be the character space of $\Cs(C(X), p)$ and $ \iota \colon C(X) \to C(\tilde{X})$ be the inclusion of $C(X)$ to $C(\tilde{X})$.
  By the above proposition,
  one has $\tilde{X} \cong \overline{U} \sqcup (X\setminus U)$.
  Hence, $\pi := \iota_* \colon \tilde{X} \to X$ is a proper quotient map,
  and for every $x \in X \setminus \d U$, one has $|\pi^{-1}(x)| = 1$.
  Furthermore, for every $x \in X$,
  $x$ belong to $\d U$ if and only if one has $|\pi^{-1}(x)| = 2$.
\end{remark}

\begin{example}
  Let $X$ be a compact $\Gamma$-space,  $U$ be an open subset of $X$ and
  $p = \chi_{\overline{U}}\in l_\infty(X)$.
  Let compact $\Gamma$-space $\tilde{X}$ to be the character space of $\Cs(C(X), \Gamma \cdot p)$
  , $ \iota \colon C(X) \to C(\tilde{X})$ be the inclusion of $C(X)$ to $C(\tilde{X})$
  and $\pi := \iota_* \colon \tilde{X} \to X$.
  Then $\pi$ is a proper factor map.
  If $\mathcal{S}_X \cap \Gamma \cdot \d U = \emptyset$,
  then 
  for every $x \in \mathcal{S}_X$,
  one has $|\pi^{-1}(x)| = 1$. 
\end{example}

\begin{proof}
  We prove $|\pi^{-1}(x)| = 1$ for every $x \in \mathcal{S}_X$.
  It suffices to show that for every $ x \in X$, one has $x \in \Gamma \cdot \d U$ if and only if $|\pi^{-1}(x)| \geq 2$.
  
  Let $x \in \Gamma \cdot \d U$.
  There exists $g \in \Gamma$ s.t. $x \in g.\d U$.
  Let $X_1$ to be the character space of $\Cs(C(X), g.p)$. 
  Let $\iota_1$ be the inclusion of $C(X)$ to $\Cs(C(X), g.p)$ and $\iota_2$ be the inclusion of $\Cs(C(X), g.p)$ to $C( \tilde{X})$.
  Let $\pi_1 = (\iota_1)_* \colon X_1 \to X$ and $\pi_2 = ( \iota_2)_*\colon \tilde{X} \to X_1$.
  Since $\pi^{-1}(x) = \pi_2^{-1} (\pi_1^{-1}(x))$,
  one has $|\pi^{-1}(x)| = |\pi_2^{-1} (\pi_1^{-1}(x))| \geq |\pi_1^{-1}(x)| = 2$ by the above remark.
  
  Let $x$ be an element of $X$ such that $|\pi^{-1}(x)| \geq 2$.
  There exist distinct elements $y_1$ and $y_2$ in $\tilde{X}$ such that $\pi(y_1) = \pi(y_2) = x$.
  Let $\Gamma = \{g_n\}_{n \in \N}$.
  Let $A_n$ be a \Cs-subalgebra of $C( \tilde{X})$ generated by $C(X)$ and $\{g_k.p\}_{k=1}^n$.
  Then $\cup_{n} A_n$ is dense in $C( \tilde{X})$.
  So, there exist $n \in \N$ and $a \in A_n$ such that $y_1(a) \neq y_2(a)$.
  We regard $y_1,\, y_2$ as characters on $A_n$.
  By the above remark, there exists $g \in \Gamma$ such that $x \in g.\d U$.  
\end{proof}

The assumption in the above remark is satisfied in many cases including the following case.
\begin{example}
  Let $X$ be a compact metric $\Gamma$-space.
  Assume $\mathcal{S}_X$ is separable and the Lebesgue covering dimension of $\mathcal{S}_X$ is zero. 
  Let $A$, $B$ be disjoint closed subsets in $X$. 
  Then, by the second separation theorem (see \cite[1.5.13]{engelking1978dimension}),
  there exists an open set $U$ in $X$ s.t. $A \subset U$, $B \subset U^c$ and $\d U \cap \mathcal{S}_X = \emptyset$.
  Since $\mathcal{S}_X$ is $\Gamma$-invariant, one has $\mathcal{S}_X \cap \Gamma \cdot \d U = \emptyset$.
\end{example}

We construct another example.
We are grateful to Yuhei Suzuki for letting us know the following example.
We say an element $g$ in a free group is indivisible if it is not a proper power of some element in the free group.
For every element $x$ in a free group, we denote by $x^\infty$ the limit of $x^n$ in the Gromov compactification of the free group. 

\begin{remark}\label{remark:indivisible}
  We consider the free group $\mathbb{F}_d$ of rank $d$, where $1 < d < \infty$, and its action on its Gromov boundary $\d \mathbb{F}_d$.
  Let $x$ be an element of $\mathbb{F}_d$.
  The stabilizer group of $x^\infty$ is a cyclic group and we denote by $y$ its generator.
  Then $y$ is indivisible.
  Also, one has either $x^\infty = y^\infty$ or $x^\infty = y^{-\infty}$.
  By replacing $y$ with $y^{-1}$, we can choose an element $y$ satisfying $x^\infty = y^\infty$.
  Also, for an indivisible element $x$, every element in $\mathbb{F}_d$ which fixes $x^\infty$ is represented by the power of $x$. 
\end{remark}

For a subgroup $\Lambda$ of $\Gamma$ and an element $t$ of $\Gamma$,
we denote by $C_\Lambda(t)$ the $\Lambda$-conjugacy class of $t$. 

\begin{example}\label{example:suzuki}
  Let $\mathbb{F}_d := \langle a_1, \ldots, a_d\rangle$ be the free group of rank $d$, where $1 < d < \infty$. 
  Let $\d \mathbb{F}_d$ denote the Gromov boundary.
  Let $\Gamma$ be a non-trivial normal subgroup of $\mathbb{F}_d$.
  Then the $\Gamma$-action on $\d \mathbb{F}_d$ is topologically free and minimal.
  Let $T$ be a subset of $\mathbb{F}_d$ which consists of indivisible elements
  such that $|\{C_{\F_d}(t) \mid t \in T\}| < \infty$. 
  Assume that for every non-zero $n \in \Z$ and for every $t \in T$, $t^n \notin \Gamma$.
  Let
  \begin{align*}
    \mathcal{R}_{T, \Gamma} := \{(x,x) \mid x \in \d \mathbb{F}_d\}\cup\{(g.t^\infty, g.t^{-\infty})\mid g \in \Gamma,\, t \in T \cup T^{-1} \}.
  \end{align*}
  Then $\mathcal{R}_{T, \Gamma}$ is a $\Gamma$-invariant equivalence relation on $\d \mathbb{F}_d$.
  Also, $\d \mathbb{F}_d/ \mathcal{R}_{T, \Gamma}$ is Hausdorff.
  
  Furthermore, 
  the quotient map  $\pi \colon \d \mathbb{F}_d \to \d\mathbb{F}_d / \mathcal{R}_{T, \Gamma}$ is
  a proper quotient map satisfying for every $x \in \d\mathbb{F}_d / \mathcal{R}_{T, \Gamma}$ with a non-trivial stabilizer, $|\pi^{-1}(x)|=1$.
\end{example}

\begin{proof}
  Since the $\F_d$-action on $\d \F_d$ is topologically free,
  the $\Gamma$-action on $\d \F_d$ is topologically free.
  We will show the minimality of the $\Gamma$-action.
  Let $S := \{a_1, \ldots, a_d\}$, which is a finite generating set of $\d \mathbb{F}_d$. 
  We regard $\d \mathbb{F}_d$ as the set of infinite reduced words of $\mathbb{F}_d$ (see \cite[5.1]{brown2008c}). 
  Let $x = x_1x_2 \cdots$ and $y = y_1 y_2 \cdots$ be elements in $\d \mathbb{F}_d$,
  where $x_i, y_i \in S \cup S^{-1}$.
  Let $\gamma \in \Gamma$ be a non-trivial element. 
  For each $n \in \N$, there exists $z_n \in S \cup S^{-1}$ such that $|y_n z_n| > |y_n|$ and $|z_n\gamma| > |\gamma|$,
  where $|\cdot|$ is the length function on $\Gamma$ determined by $S$.
  Let $w_n = (y_1\cdots y_n)z_n\gamma z_n^{-1}(y_1 \cdots y_n)^{-1} \in \mathbb{F}_d$.
  Then one has $w_nx \to y$.
  Also, by the normality of $\Gamma$, $w_n$ belongs to $\Gamma$.
  Hence, the $\Gamma$-action on $\d \mathbb{F}_d$ is minimal.

  Since $|\{C_{\F_d}(t) \mid t \in T\}| < \infty$,
  there exists a finite subset $T'$ of $T$ such that $\{C_{\F_d}(t) \mid t \in T\} = \{C_{\F_d}(t) \mid t \in T'\}$.
  So, one has $\mathcal{R}_{T,\Gamma} \subset \mathcal{R}_{T', \F_d}$. 
  Hence, in the similar way as \cite[Lemma 4.4]{suzuki2017group},
  we can show that $\mathcal{R}_{T, \Gamma}$ is a $\Gamma$-invariant equivalence relation on $\d \mathbb{F}_d$ and $\d \mathbb{F}_d/ \mathcal{R}_{T, \Gamma}$ is Hausdorff.

  We will show that $\pi$ satisfies that for every $x \in \d\mathbb{F}_d / \mathcal{R}_{T, \Gamma}$ with a non-trivial stabilizer, one has $|\pi^{-1}(x)|=1$.
  Let $x \in \d\mathbb{F}_d / \mathcal{R}_{T, \Gamma}$ satisfying $\gamma.x = x$ for some non-neutral element $\gamma \in \Gamma$.
  We remark each equivalence class of $\mathcal{R}_{T, \Gamma}$ contains at most two elements. 
  Suppose there exist $g \in \Gamma$ and $t \in T \cup T^{-1}$ such that $\pi^{-1}(\{x\}) = \{g.t^\infty, g.t^{-\infty}\}$.
  Then one has $\gamma. g. t^\infty \in \{g.t^\infty, g.t^{-\infty}\}$. 
  Since there exists no element $h \in \mathbb{F}_d$ with $h.t^\infty = t^{-\infty}$,
  we may assume $\gamma. g. t^\infty = g.t^\infty$. 
  Since $g^{-1}\gamma g$ fixes $t^\infty$, 
  there exist a non-zero integer $n$ such that $g^{-1}\gamma g = t^n$ by Remark \ref{remark:indivisible}.
  This contradicts the assumption.
  Hence, one has $|\pi^{-1}(x)| = 1$.
\end{proof}

By the following proposition and Theorem \ref{maintheorem},
one has a one-to-one correspondence between intermediate \Cs-algebras of $C(\d\mathbb{F}_d / \mathcal{R}_{T, \Gamma}) \rtimes_r \Gamma \subset C( \d\mathbb{F}_d) \rtimes_r \Gamma$ and subsets of $T$.
\begin{proposition} \label{proposition:conjugacy}
  In addition to the above condition,
  assume for distinct elements $s$, $t \in T$, one has $C_\Gamma(s) \cap \{t, t^{-1}\} = \emptyset$. 
  Then the map
  \begin{align*}
    ( \rho \colon \d \mathbb{F}_d \to Z) \mapsto S_\rho , 
  \end{align*}
  where $S_\rho := \{ t \in T \mid |\rho^{-1}\circ \rho(t)| = 2\}$,
  gives a one-to-one correspondence between intermediate extensions of $\pi$ and
  subsets of $T$ 
  with its inverse map given by
  \begin{align*}
    S \mapsto ( \rho_{S} \colon \d \mathbb{F}_d \to \d\mathbb{F}_d / \mathcal{R}_{S, \Gamma}),
  \end{align*}
  where $\rho_{S} \colon \d \mathbb{F}_d \to \d\mathbb{F}_d / \mathcal{R}_{S, \Gamma}$ is the quotient map.
\end{proposition}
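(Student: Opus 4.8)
The plan is to translate the classification of intermediate extensions of $\pi$ into a classification of closed $\Gamma$-invariant equivalence relations $\mathcal{R}$ on $\d\F_d$ lying between the diagonal $\Delta$ and $\mathcal{R}_{T,\Gamma}$, and then to show that such relations are parametrized by subsets of $T$. First I would record the standard dictionary: an intermediate extension $\d\F_d \xrightarrow{\rho} Z \to \d\F_d/\mathcal{R}_{T,\Gamma}$ is determined up to isomorphism by the relation $\mathcal{R}_\rho := \{(x,y) : \rho(x) = \rho(y)\}$, which is closed (as $Z$ is Hausdorff), $\Gamma$-invariant (as $\rho$ is equivariant), and satisfies $\Delta \subseteq \mathcal{R}_\rho \subseteq \mathcal{R}_{T,\Gamma}$; conversely every such relation arises this way, and since $\d\F_d$ is compact the induced maps are automatically proper. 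Under this dictionary $\rho_S$ corresponds to $\mathcal{R}_{S,\Gamma}$, so it suffices to prove that $\mathcal{R} \mapsto \{t \in T : (t^\infty, t^{-\infty}) \in \mathcal{R}\}$ is a bijection from the intermediate relations onto the subsets of $T$, with inverse $S \mapsto \mathcal{R}_{S,\Gamma}$.

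The key structural input is that the nontrivial classes of $\mathcal{R}_{T,\Gamma}$ are exactly the two-element sets $\{g.t^\infty, g.t^{-\infty}\}$ with $g \in \Gamma$ and $t \in T \cup T^{-1}$, and that $\Gamma$ permutes them. I would next show that the $\Gamma$-orbits of these classes are in bijection with $T$. Surjectivity is immediate, since $t$ and $t^{-1}$ define the same unordered pair, so every class lies in the orbit of some $\{t^\infty, t^{-\infty}\}$ with $t \in T$. For injectivity — where the additional hypothesis enters — suppose $s, t \in T$ satisfy $\{g.s^\infty, g.s^{-\infty}\} = \{t^\infty, t^{-\infty}\}$ for some $g \in \Gamma$. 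Using that $g.s^{\pm\infty} = (gsg^{-1})^{\pm\infty}$ on the boundary and that $gsg^{-1}$ is again indivisible, Remark \ref{remark:indivisible} forces $gsg^{-1} \in \{t, t^{-1}\}$, that is $\{t, t^{-1}\} \cap C_\Gamma(s) \neq \emptyset$; by hypothesis this is possible only when $s = t$. The very same computation shows that the $\mathcal{R}_{T,\Gamma}$-class of $t^\infty$ is precisely $\{t^\infty, t^{-\infty}\}$, so for an intermediate $\rho$ one has $|\rho^{-1}(\rho(t^\infty))| = 2$ if and only if $(t^\infty, t^{-\infty}) \in \mathcal{R}_\rho$.

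With this rigidity in hand the bijection is formal. As $\mathcal{R}$ is $\Gamma$-invariant and the nontrivial classes of $\mathcal{R}_{T,\Gamma}$ are disjoint pairs, $\mathcal{R}$ contains a given pair if and only if it contains its whole $\Gamma$-orbit; hence $\mathcal{R}$ is the union of $\Delta$ with the orbits it contains, and these orbits are indexed by $S_\rho = \{t \in T : (t^\infty, t^{-\infty}) \in \mathcal{R}\}$, giving $\mathcal{R} = \mathcal{R}_{S_\rho, \Gamma}$. Thus $\rho \mapsto S_\rho$ followed by $S \mapsto \rho_S$ is the identity. Conversely, for $S \subseteq T$ the relation $\mathcal{R}_{S,\Gamma}$ is, by the argument of Example \ref{example:suzuki} applied to $S \subseteq T$, a closed $\Gamma$-invariant equivalence relation with Hausdorff quotient, so $\rho_S$ is a genuine intermediate extension; and $S_{\rho_S} = S$ because $(t^\infty, t^{-\infty}) \in \mathcal{R}_{S,\Gamma}$ forces $t \in S$ by the injectivity above.

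I expect the main obstacle to be this injectivity step, namely controlling when two elements of $T$ produce the same $\Gamma$-orbit of boundary pairs: this is exactly where indivisibility of the elements of $T$, the identity $g.t^\infty = (gtg^{-1})^\infty$, and the conjugacy hypothesis $C_\Gamma(s) \cap \{t, t^{-1}\} = \emptyset$ must be combined through Remark \ref{remark:indivisible}. Everything else — the dictionary between extensions and equivalence relations and the reduction of an intermediate relation to a union of full $\Gamma$-orbits — is routine once this rigidity of boundary stabilizers is in place.
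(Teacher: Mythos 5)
Your proof is correct and takes essentially the same route as the paper: the paper constructs the homeomorphism $\d\F_d/\mathcal{R}_{S_\rho,\Gamma}\cong Z$ directly rather than passing through the dictionary with closed $\Gamma$-invariant equivalence relations, but the decisive step is identical in both — showing $S=S_{\rho_S}$ via indivisibility, the identity $g.t^{\pm\infty}=(gtg^{-1})^{\pm\infty}$, Remark \ref{remark:indivisible}, and the hypothesis $C_\Gamma(s)\cap\{t,t^{-1}\}=\emptyset$.
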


\begin{proof}
  Let ($\rho$, $\rho'$) be an intermediate extension of $\pi$, where $\rho \colon \d \mathbb{F}_d \to Z$ and $\rho' \colon Z \to \d \mathbb{F}_d / \mathcal{R}_{T, \Gamma}$ are factor maps such that $\rho' \circ \rho = \pi$.
  Since $\rho_{S_\rho}$ is surjective, 
  we define the $\Gamma$-equivariant map $f \colon \d \mathbb{F}_d / \mathcal{R}_{S_\rho, \Gamma} \to Z$ by $x \mapsto \rho(y)$ for some $y \in \d \mathbb{F}_d$ with $\rho_{S_\rho}(y) = x$.
  We first check this map is well-defined.
  Let $x \in \d \mathbb{F}_d/\mathcal{R}_{S_\rho, \Gamma}$.
  Let $y_1$ and $y_2$ be elements in $\d \mathbb{F}_d$ such that $\rho_{S_\rho}(y_1) = \rho_{S_\rho}(y_2) = x$.
  We may assume $y_1 \neq y_2$. 
  By the definition of $\mathcal{R}_{S_\rho, \Gamma}$, we may assume $y_1 = g.s^\infty$ and $y_2 = g.s^{-\infty}$ for some $g \in \Gamma$ and some $s \in S_\rho$.
  By the definition of $\mathcal{R}_{S_\rho, \Gamma}$,
  one has $(g.s^\infty, g.s^{-\infty}) \in \mathcal{R}_{S_\rho, \Gamma}$.
  Hence, one has $\rho(y_1) = \rho(y_2)$.
  Hence, $f$ is well-defined.
  Similarly, we can check $f$ is $\Gamma$-equivariant. 

  We will show $f$ is a continuous bijective map, that is an isomorphism. 
  By the definition of $f$, one has $\rho = f \circ \rho_{S_\rho}$.
  Hence, $f$ is surjective.
  Also, since $\rho$ and $\rho_{S_\rho}$ are quotient maps, $f$ is continuous.
  We will show the injectivity of $f$.
  Let $x_1$, $x_2$ be elements in $\d \mathbb{F}_d/\mathcal{R}_{S_\rho, \Gamma}$ with $f(x_1) = f(x_2)$.
  There exist $y_1$ and $y_2$ in $\d \mathbb{F}_d$ such that $\rho_{S_\rho}(y_i) = x_i$ for each $i \in \{1,2\}$.
  Then one has $\rho(y_1) = \rho(y_2)$. 
  We may assume $y_1 \neq y_2$.
  By the definition of $S_\rho$, one has $(y_1, y_2) \in \mathcal{R}_{S_\rho, \Gamma}$. 
  So, one has $x_1 = x_2$.
  Hence, $f$ is injective.
  Hence, $\d \mathbb{F}_d/\mathcal{R}_{S_\rho, \Gamma} \cong Z$ as an intermediate extension of $\pi$.

  Let $S$ be a subset of $T$.
  We will show that $S = S_{\rho_S}$. 
  Let $s \in S$.
  Since $(s^\infty, s^{-\infty}) \in \mathcal{R}_{S, \Gamma}$,
  one has $s \in S_{\rho_S}$.
  Let $s \in S_{\rho_S}$.
  Then one has $(s^\infty, s^{-\infty}) \in \mathcal{R}_{S, \Gamma}$.
  So, there exist $g \in \Gamma$ and $s' \in S$ such that $s'^\infty = g.s^\infty$ or $s'^\infty = g.s^{-\infty}$.
  Suppose $s \neq s'$. 
  Since $gsg^{-1}$ fixes $g.s^\infty$ and $g.s^{-\infty}$, and $s'$ is indivisible,
  then there exist $n \in \Z \setminus \{0\}$ such that $s'^n = gsg^{-1}$ by Remark \ref{remark:indivisible}.
  Since $s$ is indivisible,
  one has $n= \pm 1$.
  This contradicts the assumption, that is $C_\Gamma(s) \cap \{s', s'^{-1}\} = \emptyset$,
  where $[s]$ is the $\Gamma$-conjugacy class of $s$.
  Hence, one has $s = s' \in S$. 
\end{proof}

\begin{cororally}
  In the above conditions, 
  the map
  \begin{align*}
    S \mapsto C( \d \mathbb{F}_d / \mathcal{R}_{S, \Gamma}) \rtimes_r \Gamma
  \end{align*}
  gives a lattice isomorphism between the lattice of subsets of $T$ and that of intermediate \Cs-algebras of $C( \d \mathbb{F}_d / \mathcal{R}_{T, \Gamma}) \rtimes_r \Gamma \subset C( \d \mathbb{F}_d) \rtimes_r \Gamma$.
\end{cororally}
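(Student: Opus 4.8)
The plan is to derive the corollary by composing the two correspondences already in hand: the lattice isomorphism of Theorem \ref{maintheorem} and the parametrization of intermediate extensions by subsets of $T$ supplied by Proposition \ref{proposition:conjugacy}.

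First I would check that the hypotheses of Theorem \ref{maintheorem} hold for the factor map $\pi \colon \d \F_d \to \d \F_d / \mathcal{R}_{T, \Gamma}$ of Example \ref{example:suzuki}. Since $\d \F_d$ is compact and $\d \F_d / \mathcal{R}_{T, \Gamma}$ is compact Hausdorff, $\pi$ is automatically proper, and Example \ref{example:suzuki} already provides the freeness condition that $|\pi^{-1}(x)| = 1$ for every $x$ with non-trivial stabilizer. The remaining hypothesis is that $\Gamma$ has the AP: as $\Gamma$ is a subgroup of $\F_d$, which is weakly amenable and hence has the AP, and the AP passes to subgroups, $\Gamma$ has the AP. (Equivalently, $\Gamma$ is free by the Nielsen--Schreier theorem, hence has the AP.)

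Granting this, Theorem \ref{maintheorem} yields a lattice isomorphism $C(Z) \mapsto C(Z) \rtimes_r \Gamma$ between the intermediate extensions of $\pi$ and the intermediate \Cs-algebras of $C(\d \F_d / \mathcal{R}_{T, \Gamma}) \rtimes_r \Gamma \subset C(\d \F_d) \rtimes_r \Gamma$, while Proposition \ref{proposition:conjugacy} gives a bijection between subsets of $T$ and intermediate extensions of $\pi$ via $S \mapsto \rho_S$, where $\rho_S \colon \d \F_d \to \d \F_d / \mathcal{R}_{S, \Gamma}$, with inverse $\rho \mapsto S_\rho$. Composing these two maps gives exactly $S \mapsto C(\d \F_d / \mathcal{R}_{S, \Gamma}) \rtimes_r \Gamma$, which is therefore a bijection.

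The one point that still requires verification --- and which I expect to be the main, though mild, obstacle --- is that this composite respects the lattice structure, since Proposition \ref{proposition:conjugacy} is stated only as a set bijection. Because a bijection between lattices that preserves order in both directions automatically transports meets and joins, it suffices to analyze the order. Here $S_1 \subseteq S_2$ is equivalent to $\mathcal{R}_{S_1, \Gamma} \subseteq \mathcal{R}_{S_2, \Gamma}$, which produces a factor map $\d \F_d / \mathcal{R}_{S_1, \Gamma} \to \d \F_d / \mathcal{R}_{S_2, \Gamma}$ and hence the reverse inclusion $C(\d \F_d / \mathcal{R}_{S_2, \Gamma}) \subseteq C(\d \F_d / \mathcal{R}_{S_1, \Gamma})$, and correspondingly of the reduced crossed products; conversely a finer quotient $\rho_1$ glues fewer of the pairs $\{t^\infty, t^{-\infty}\}$, so $S_{\rho_1} \subseteq S_{\rho_2}$. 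Thus the composite and its inverse both reverse inclusions. Via the self-duality of the power set of $T$ under $S \mapsto T \setminus S$, this contravariant order bijection is the asserted lattice isomorphism, which completes the proof.
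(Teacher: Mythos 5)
Your proposal is correct and matches the paper's (implicit) argument exactly: the paper derives the corollary by composing Proposition \ref{proposition:conjugacy} with Theorem \ref{maintheorem}, offering no further proof. Your additional checks --- that $\Gamma$ has the AP as a subgroup of $\F_d$, that properness is automatic for compact Hausdorff spaces, and that the composite bijection is order-reversing (hence a lattice isomorphism only up to the self-duality of the power set) --- are all sound and, if anything, more careful than the paper's one-line justification.
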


In many cases, we can construct an infinite subset $T$ of $\F_d$ satisfying the above conditions as follows. 
\begin{proposition}
  Let $\mathbb{F}_d$ be the free group of rank $d$ $(d > 1)$.
  Let $\Gamma$ be a normal subgroup of $\F_d$ such that the  quotient group $\F_d / \Gamma$ is not virtually cyclic.
  Then
  there exists an infinite subset $T$ satisfying the following properties:
  \begin{enumerate}
  \item Every element of $T$ is indivisible.
  \item For every $0 \neq n \in \Z$ and $t \in T$, one has $t^n \notin \Gamma$.
  \item For every distinct elements $s$, $t \in T$, one has $C_\Gamma(s) \cap \{t,t^{-1}\} = \emptyset$.
  \item $|\{C_{\F_d}(t) \mid t \in T\}| < \infty$.
  \item $|\{C_\Gamma(t) \mid t \in T\}| = \infty$.
  \end{enumerate}
\end{proposition}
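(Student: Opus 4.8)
The plan is to realize the entire set $T$ inside a single $\mathbb{F}_d$-conjugacy class, so that condition (4) holds trivially with $|\{C_{\mathbb{F}_d}(t)\mid t\in T\}|=1$, while arranging that this one class already splits into infinitely many $\Gamma$-conjugacy classes, which supplies (5). Concretely, I would fix one indivisible element $t_0\in\mathbb{F}_d$ whose image $\bar{t_0}$ in the quotient group $Q:=\mathbb{F}_d/\Gamma$ has infinite order, and then let $T$ be a set of representatives, one from each $\Gamma$-conjugacy class contained in $C_{\mathbb{F}_d}(t_0)$.

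The book-keeping of these classes is a double-coset computation. Since $t_0$ is indivisible, its centralizer in $\mathbb{F}_d$ is exactly $\langle t_0\rangle$, so $C_{\mathbb{F}_d}(t_0)$ is parametrized by the coset space $\mathbb{F}_d/\langle t_0\rangle$ via $g\langle t_0\rangle\mapsto gt_0g^{-1}$. Two conjugates $gt_0g^{-1}$ and $ht_0h^{-1}$ are $\Gamma$-conjugate precisely when $h^{-1}\gamma g\in\langle t_0\rangle$ for some $\gamma\in\Gamma$, i.e. when $\Gamma g\langle t_0\rangle=\Gamma h\langle t_0\rangle$; hence the $\Gamma$-conjugacy classes inside $C_{\mathbb{F}_d}(t_0)$ are in bijection with the double cosets $\Gamma\backslash\mathbb{F}_d/\langle t_0\rangle$. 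Because $\Gamma$ is normal, this double-coset space is the coset space $Q/\langle\bar{t_0}\rangle$, which is infinite exactly when $\langle\bar{t_0}\rangle$ has infinite index in $Q$.

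So everything reduces to producing the element $t_0$, and this is the step I expect to be the crux. First, I need $\bar{t_0}$ to have infinite order: this is equivalent to $\langle t_0\rangle\cap\Gamma=\{e\}$, which is condition (2), and it forces $\langle\bar{t_0}\rangle\cong\Z$. Second, I need $\langle\bar{t_0}\rangle$ to have infinite index in $Q$ in order to get (5). Here the hypothesis that $Q$ is not virtually cyclic does the remaining work for free: if some infinite-order element of $Q$ generated a finite-index subgroup, that subgroup would be an infinite cyclic subgroup of finite index, making $Q$ virtually cyclic, contrary to hypothesis. Thus \emph{any} infinite-order element of $Q$ automatically spans an infinite-index subgroup; and after replacing a chosen lift by its indivisible root in $\mathbb{F}_d$ — which does not change having infinite order in $Q$ and makes $t_0$ indivisible — conditions (1), (2) and (5) are all secured. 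The genuine obstacle is therefore the existence of an infinite-order element of $Q$: this is the single place where the argument must engage seriously with the hypothesis, and it is precisely what non-virtual-cyclicity of $Q$ must be leveraged to provide.

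Finally I would verify (3) and confirm (1), (2), (4). Every element of $T$ is conjugate to $t_0$, hence indivisible, giving (1); and $t^n=gt_0^ng^{-1}\in\Gamma$ iff $t_0^n\in\Gamma$ by normality of $\Gamma$, which fails for $n\neq 0$, giving (2); condition (4) is immediate. For (3) I must rule out, for distinct $s,t\in T$, that $s$ is $\Gamma$-conjugate to $t$ or to $t^{-1}$: the former is excluded because $T$ contains one representative per $\Gamma$-class, and for the latter I would arrange that $t_0$ is not conjugate to $t_0^{-1}$ in $\mathbb{F}_d$, so that $t^{-1}\in C_{\mathbb{F}_d}(t_0^{-1})$ lies in a set disjoint from $C_\Gamma(s)\subseteq C_{\mathbb{F}_d}(t_0)$. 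Should $t_0$ happen to be conjugate to its inverse, I would instead thin $T$ by retaining one class from each orbit of the order-two involution $[w]_\Gamma\mapsto[w^{-1}]_\Gamma$, which still leaves infinitely many classes and removes all collisions. This completes all five conditions.
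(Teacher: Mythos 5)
Your construction coincides with the paper's: fix one indivisible $t_0\in\F_d$ whose image $\bar{t_0}$ in $Q:=\F_d/\Gamma$ has infinite order, let $T$ consist of one representative from each $\Gamma$-conjugacy class inside $C_{\F_d}(t_0)$, and count those classes by the index $[Q:\langle\bar{t_0}\rangle\,]$, using that the centralizer of an indivisible element is the cyclic group it generates. Your double-coset bookkeeping is the same computation the paper performs when it shows $|S|=[\F_d/\Gamma:\langle t\rangle\Gamma]$, and your observation that an infinite-order element of a non-virtually-cyclic group must generate an infinite-index subgroup is correct. For (3) your primary argument (disjointness of $C_{\F_d}(t_0)$ and $C_{\F_d}(t_0^{-1})$) is also the paper's, which derives the non-conjugacy of $t_0$ and $t_0^{-1}$ from the boundary action; your thinning fallback is never needed, since no nontrivial element of a free group is conjugate to its inverse.

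The genuine gap is precisely the step you flag as the crux and then leave open: the existence of an element of $\F_d$ whose image in $Q$ has infinite order. You assert that non-virtual-cyclicity of $Q$ ``must be leveraged to provide'' this, but no such argument exists, because the implication is false: a finitely generated free group admits infinite torsion quotients (free Burnside groups $B(d,n)$ for large odd $n$, or the Grigorchuk group), and an infinite torsion group is never virtually cyclic. For the corresponding kernel $\Gamma$, every $t\in\F_d$ satisfies $t^n\in\Gamma$ for some $n\neq 0$, so condition (2) is unsatisfiable and no such $T$ can exist. You should be aware that the paper's own proof opens with exactly the same unjustified assertion (``Since $\F_d/\Gamma$ is not virtually cyclic, there exists an indivisible element $t$ \dots such that $t^n\notin\Gamma$''), so it shares the defect; the statement needs a strengthened hypothesis --- e.g.\ that $\F_d/\Gamma$ contains an element of infinite order --- under which both your argument and the paper's are complete. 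Identifying the crux was the right instinct; the missing realization is that the hypothesis as stated cannot deliver it.
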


\begin{proof}
  Since $\F_d/\Gamma$ is not virtually cyclic,
  there exists an indivisible element $t$ of $\F_d$ such that
  for every $0 \neq n \in \Z$, $t^n \notin \Gamma$.
  We denote by $\langle t\rangle$ the subgroup of $\F_d$ generated by $t$.
  Let $S$ be a subset of $\F_d$ such that $\bigsqcup_{s \in S} C_\Gamma(sts^{-1}) = C_{\F_d}(t)$. 
  We show $|S| = [\F_d / \Gamma \colon \langle t\rangle \Gamma]$,
  where $ [\F_d / \Gamma \colon \langle t\rangle \Gamma]$ is the index of $\langle t\rangle \Gamma$ in $\F_d / \Gamma$.
  Let $s$, $s' \in \F_d$.
  Let $C_\Gamma(sts^{-1}) = C_\Gamma(s'ts'^{-1})$.
  Then there exists $g \in \Gamma$ such that $gsts^{-1}g^{-1} = s'ts'^{-1}$.
  Hence, one has $s'^{-1}gs t = t s'^{-1}gs$.
  Since the centralizer of $t$ is $\langle t\rangle $,
  one has $s'^{-1}gs \in \langle t\rangle $. 
  Hence, one has $s \Gamma = s' \langle t\rangle\Gamma$.
  Hence, one has $|S| = [\F_d / \Gamma \colon \langle t\rangle\Gamma]$.
  Since $\F_d / \Gamma$ is not virtually cyclic,
  one has $ [\F_d / \Gamma \colon \langle t\rangle \Gamma] = \infty$.
  Hence, one has $|S| = \infty$.
  
  Let $T = \{sts^{-1} \mid s \in S\}$. 
  By construction, $T$ satisfies the properties (4) and (5).
  Since $\Gamma$ is normal and $t$ is indivisible,
  $T$ satisfies the properties (1) and (2).
  We show $T$ satisfies the property (3).
  Let distinct elements $s$, $t \in S$.
  By construction, one has $ t \notin C_\Gamma(s)$.
  Suppose there exists $g \in \Gamma$ such that $t^{-1} = gst(gs)^{-1}$.
  Then one has $t^{-\infty} = gs.t^\infty$,
  which is a contradiction, 
  since there exists no element $h$ of $\F_d$ such that $h.t^\infty = t^{-\infty}$,
\end{proof}

\end{document}